 \makeatletter \@addtoreset{equation}{section}
\newtheorem{theorem}{Theorem}
\newtheorem{proposition}{Proposition}
\newtheorem{lemma}{Lemma}
\newtheorem{remark}{Remark}
\newtheorem{coro}{Corollary}
\newcommand{\R}{\mathbb{R}}
\newcommand{\N}{\mathbb{N}}
\newcommand{\eps}{\varepsilon}
\begin{document}
\title[Kernel estimates for elliptic operators]{Kernel estimates for elliptic operators with unbounded diffusion, drift and potential terms}
\author{S.E. Boutiah}
\address{Department of Mathematics, University Ferhat Abbas Setif-1, Setif 19000, Algeria.}
\email{sallah\_eddine.boutiah@yahoo.fr; \,\,\,boutiah@univ-setif.dz}
\author{A. Rhandi}
\address{Dipartimento di Ingegneria dell'Informazione, Ingegneria Elettrica e Matematica Applicata, Universit\`a degli
Studi di Salerno, Via Giovanni Paolo II, 132, I 84084 FISCIANO (Sa), Italy.}
\email{arhandi@unisa.it}
\author{C. Tacelli}
\address{Dipartimento di Ingegneria dell'Informazione, Ingegneria Elettrica e Matematica Applicata, Universit\`a degli
Studi di Salerno, Via Giovanni Paolo II, 132, I 84084 FISCIANO (Sa), Italy.}
\email{ctacelli@unisa.it}
\subjclass[2000]{35K08, 35J10, 47D08, 35K20, 47D07}
\keywords{Schr\"odinger type operator, semigroup, heat kernel estimates.}

\maketitle

\begin{abstract}
In this paper we prove that the heat kernel $k$ associated to the operator
$A:= (1+|x|^\alpha)\Delta +b|x|^{\alpha-1}\frac{x}{|x|}\cdot\nabla -|x|^\beta$ satisfies
$$
k(t,x,y) \leq c_1e^{\lambda_0 t+ c_2t^{-\gamma}}\left(\frac{1+|y|^\alpha}{1+|x|^\alpha}\right)^{\frac{b}{2\alpha}}
\frac{(|x||y|)^{-\frac{N-1}{2}-\frac{1}{4}(\beta-\alpha)}}{1+|y|^\alpha}
e^{-\frac{\sqrt{2}}{\beta-\alpha+2}\left(|x|^{\frac{\beta-\alpha+2}{2}}+ |y|^{\frac{\beta-\alpha+2}{2}}\right)}
$$
for $t>0,\,|x|,\,|y|\ge 1$, where $b\in\mathbb{R}$, $c_1,\,c_2$ are positive constants, $\lambda_0$ is the largest eigenvalue of the operator $A$, and $\gamma=\frac{\beta-\alpha+2}{\beta+\alpha-2}$, in the case where $N>2,\,\alpha>2$ and $\beta>\alpha -2$. The proof is based on the relationship between the log-Sobolev inequality and the ultracontractivity of a suitable semigroup in a weighted space.
\end{abstract}


\section{Introduction}
Consider the following elliptic operator
\begin{equation}\label{def-A}
A_{b,c}u(x)=(1+|x|^{\alpha})\Delta u(x) + b|x|^{\alpha-1}\frac{x}{|x|}\cdot\nabla - c|x|^{\beta}u(x), \quad x\in \R^N,
\end{equation}
where $\alpha>2$, $\beta>\alpha-2$, $b\in \R$ and $c>0$. For simplicity we denote by $A$ the operator $A_{b,1}$.

The aim of this paper is to study the behaviour of the heat kernel associated to the operator $A$. As a consequence one obtains precise estimates for the eigenfunctions associated to $A$.
In recent years the interest in second order elliptic operators with polynomially growing coefficients and their associated semigroups increases considerably, see for example
\cite{AC-AR-CT}, \cite{AC2-AR2-CT2}, \cite{AC-CT}, \cite{F-L}, \cite{G-S}, \cite{G-S 3}, \cite{Luca - Abde}, \cite{Me-Sp-Ta}, \cite{Me-Sp-Ta2}, \cite{Lo-Be} and the references therein.

For the operator $A_{b,c}$ it is proved in \cite{boutiah et al} that
the $L^p$-realization of $A_{b,c}$ in $L^p(\R^N)$ for $1<p<\infty$ with domain
$$
D_p(A)=\{u\in W^{2,p}(\R^N)\;|\; (1+|x|^\alpha)|D^2u|,(1+|x|^\alpha)^{1/2}\nabla u,|x|^\beta u\in L^p(\R^N) \}
$$
 generates a strongly continuous and analytic semigroup $T_p(\cdot)$ for $\alpha >2$, $\beta >\alpha-2$, $b\in \R$ and $c>0$, see also \cite{AC-AR-CT} for the case $b=0$. Moreover,
this semigroup is consistent, immediately compact and ultracontractive.\\
Furthermore, since the coefficients of the operator $A_{b,c}$ are locally regular, we know that the semigroup $T_p(\cdot)$
admits an integral representation through a heat kernel $k(t,x,y)$ , i.e.
$$T_p(t)f(x)=\int_{\R^N}k(t,x,y)f(y)\,dy,\quad t>0,\,x\in \R^N$$
  for all $f\in L^p(\R^N)$, cf. \cite{Lo-Be}, \cite{Me-Pa-Wa}.

In \cite{Luca - Abde} (resp. \cite{AC2-AR2-CT2}) estimates of the kernel $k(t,x,y)$
for $b=0$, $\alpha\in [0,2)$  and $\beta>2$ (resp. $b=0$, $\alpha>2$ and $\beta>\alpha-2$) were obtained. Even in the non-autonomous case, for a large class of second order elliptic operators with unbounded coefficients, heat kernel estimates was obtained, by using the techniques of Lyapunov functions, in \cite{KLR}.
For the critical case $\beta=\alpha -2$, estimates of the heat kernel associated to the operator $(1+|x|^{\alpha})\Delta -c|x|^{\alpha-2}$ for $\alpha>2$, $c>0$ and $N>2$ have been proved in \cite{TD-RM-CT}. Concerning the operator $A_{b,0}$, heat kernel estimates was obtained in
 \cite{Me-Sp-Ta}.

Our goal is to prove upper bounds for the operator $A:=A_{b,1}$ in the case where $\alpha> 2$, $\beta>\alpha-2$ and any constant $b \in \mathbb{R}$.
Our method consists in providing upper and lower estimates for the ground state $\Phi$ of $A$ corresponding to the largest eigenvalue $\lambda_0$ and adapting the arguments used in \cite{Davies1}.\\ \ \\
The paper is organized as follows.
In section \ref{eigenfunction} we
show that the eigenfunction $\Phi$ of $A$ associated to the largest eigenvalue $\lambda_0$ can be estimated from below and above by the function
$$|x|^{-\frac{N-1}{2}-\frac{\beta-\alpha}{4}} (1+|x|^{\alpha})^{-\frac{b}{2\alpha}}e^{-\int_1^{|x|}\sqrt{\frac{r^\beta}{1+r^\alpha}}dr}$$
for $|x|$ and $|y|$ sufficiently large.
\\
In Section \ref{form amu}, by means of a suitable multiplication operator $Tu=\phi u$, we rewrite the operator $A$ in the following form
\[
A=T^{-1}HT,
\]
where $H=(1+|x|^\alpha)\Delta -U$ with
$U=(1+|x|^{\alpha})\frac{\Delta \phi}{\phi}+|x|^\beta$
and use an associated positive, closed, symmetric form  $h(\cdot,\cdot)$ defined on a domain $D(h)$ in an appropriate weighted Hilbert space  $L_{\mu}^{2}(\mathbb{R}^N)$. This permits us to define the associated self-adjoint operator $H_\mu$ and his corresponding semigroup $(e^{tH_\mu})$. It can be seen that $(e^{tH_\mu})$ is given by a heat kernel $k_\mu$.
Adapting the arguments used in \cite{Davies1} and \cite{Luca - Abde}, we prove the following intrinsic ultracontractivity
\[
k_\mu(t,x,y)\leq c_1 e^{\lambda_0 t}e^{c_2 t^{-\gamma}}\Phi(x)\Phi(y), \quad t>0,\,x,\,y\in \R^N,
\]
where $c_1,c_2$ are positive constant, $\gamma=\frac{\beta-\alpha+2}{\beta+\alpha-2}$ and $\lambda_0$ is the largest eigenvalue of $A$, provided that $N>2,\,\alpha> 2$ and $\beta>\alpha-2$. Accordingly, we obtain upper bounds of the heat kernel
$$
k(t,x,y) \leq c_1e^{\lambda_0 t+ c_2t^{-\gamma}}\left(\frac{1+|y|^\alpha}{1+|x|^\alpha}\right)^{\frac{b}{2\alpha}}
\frac{(|x||y|)^{-\frac{N-1}{2}-\frac{1}{4}(\beta-\alpha)}}{1+|y|^\alpha}
e^{-\frac{\sqrt{2}}{\beta-\alpha+2}\left(|x|^{\frac{\beta-\alpha+2}{2}}+ |y|^{\frac{\beta-\alpha+2}{2}}\right)}
$$
for $t>0,\,|x|,|y|\ge 1$.\\

{\large \bf Notation.}
For $x\in \mathbb{R}^N$ and $r>0$ we set $B_r = \{ x\in \mathbb{R}^N : |x|<r\}$.
We denote by $<\cdot,\cdot>$ the euclidean scalar product and by $|\cdot|$  the euclidean norm.
We use as always a standard notation for function spaces. So, we denote by $L^p(\mathbb{R}^N)$ and $W^{2,p}(\mathbb{R}^N)$ the standard $L^p$ and Sobolev spaces, respectively. The space of bounded and continuous functions on $\mathbb{R}^N$ is denoted by $C_{b}(\mathbb{R}^N)$. Finally, in the whole manuscript the notation $\phi \approx \psi$ on a set $\Omega$ means that there are positive constants $C_1,\,C_2$ such that $C_1\psi(x)\le \phi(x)\le C_2\psi(x)$ for all $x\in \Omega$.

%
%
%
%

\section{Estimating the ground state $\Phi$}\label{eigenfunction}
For $\alpha> 2,\,\beta>\alpha-2$ and $N>2$, we denote by $A_p$ the realization in $L^p(\R^N),\,1<p<\infty$, of the operator $A:=A_{b,1}$ defined in \eqref{def-A}. We recall, see \cite[Theorem 3 and Theorem 4]{boutiah et al}, that $A_p$ with domain
$$
D_p(A)=\{u\in W^{2,p}(\R^N)\;|\; (1+|x|^\alpha)|D^2u|,(1+|x|^\alpha)^{1/2}\nabla u,|x|^\beta u\in L^p(\R^N) \}
$$
generates a strongly continuous and analytic semigroup $T_p(\cdot)$ in $L^p(\R^N)$. Moreover, for $t>0$, $T_p(t)$ maps $L^p(\R^N)$ into $C_b^{1+\eta}(\R^N)$ for any $\eta \in (0,1)$, see \cite[Proposition 5]{boutiah et al}, and the semigroup $T_p(\cdot)$ is immediately compact, see \cite[Proposition 6]{boutiah et al}. As a consequence one obtains that the spectrum $\sigma(A_p)$ of $A_p$ consists of a sequence of negative real eigenvalues which accumulates at $-\infty$, and $\sigma(A_p)$ is independent of $p$. As in \cite{AC-AR-CT} and \cite{Luca - Abde} one can see that
$T_p(\cdot)$ is irreducible, the eigenspace corresponding to the largest eigenvalue
   $\lambda_0$ of $A_p$ is one-dimensional and is spanned by a strictly positive functions $\Phi$,
which is radial, belongs to $C_b^{1+\nu}(\R^N)\cap C^2(\R^N)$ for any $\nu \in (0,1)$ and tends to $0$ as $|x|\to \infty$.

In this section we prove precise estimates for the eigenfunction $\Phi$.
The technique used here is inspired by the work \cite{AC2-AR2-CT2}.

Since $\Phi$ is radial,
one has to analyze the asymptotic behavior of the solutions to an ordinary differential equation. In this context
some ideas coming from the Wentzel-Kramers-Brillouin (or Liouville-Green) approximation will be of great help. For more details see \cite{Olver}.

\begin{proposition}\label{baondedofphi}
Let $\lambda_0<0$ be the largest eigenvalue of $A$ and $\Phi$ be the
corresponding eigenfunction. If $N>2,\,\alpha >2$ and $\beta>\alpha-2$ then
$$
\Phi \approx |x|^{-\frac{N-1}{2}-\frac{1}{4}(\beta-\alpha) }(1+\vert x\vert^{\alpha})^{-\frac{b}{2\alpha}}
    e^{-\int_1^{|x|}\sqrt{\frac{r^\beta}{1+r^\alpha}}dr}
$$
on $\R^N\setminus B_1$.
\end{proposition}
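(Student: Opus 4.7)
Since $\Phi$ is radial, my plan is to write $\Phi(x)=\phi(|x|)$ and analyze the ODE satisfied by $\phi$, then apply Liouville--Green (WKB) asymptotics, following the blueprint of \cite{AC2-AR2-CT2} and guided by \cite{Olver}. Writing $r=|x|$, the eigenvalue equation $A\Phi=\lambda_0\Phi$ becomes
\begin{equation*}
(1+r^{\alpha})\phi''(r)+\left[\frac{(N-1)(1+r^{\alpha})}{r}+br^{\alpha-1}\right]\phi'(r)-(r^{\beta}+\lambda_0)\phi(r)=0.
\end{equation*}
Normalizing, I would rewrite this as $\phi''+p(r)\phi'+q(r)\phi=0$ with $p(r)=\frac{N-1}{r}+\frac{br^{\alpha-1}}{1+r^{\alpha}}$ and $q(r)=-\frac{r^{\beta}+\lambda_0}{1+r^{\alpha}}$.

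The first key step is the substitution $\phi(r)=\psi(r)u(r)$ with $\psi(r)=\exp\!\bigl(-\tfrac12\int p\bigr)$, which kills the first-derivative term. A direct computation gives
\begin{equation*}
\psi(r)=r^{-\frac{N-1}{2}}(1+r^{\alpha})^{-\frac{b}{2\alpha}},
\end{equation*}
which already produces the algebraic prefactor in the statement. The function $u$ then satisfies a Schr\"odinger-type ODE $u''=Q(r)u$, where
\begin{equation*}
Q(r)=\frac{r^{\beta}+\lambda_0}{1+r^{\alpha}}+\frac{p(r)^2}{4}+\frac{p'(r)}{2}.
\end{equation*}
For $r$ large, the dominant term is $\frac{r^{\beta}}{1+r^{\alpha}}\sim r^{\beta-\alpha}$ (positive since $\beta>\alpha-2>0$), while the auxiliary terms from $p$ contribute only lower-order corrections of order $r^{-2}$ and $r^{\alpha-2}$, which remain negligible relative to $Q$ because $\beta-\alpha>-2$.

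The second step is to apply the Liouville--Green theorem: the equation $u''=Q u$ with slowly varying positive $Q$ admits two linearly independent solutions asymptotic to
\begin{equation*}
u_{\pm}(r)\sim Q(r)^{-1/4}\exp\!\left(\pm\int_1^{r}\sqrt{Q(s)}\,ds\right),
\end{equation*}
with explicit error controls (cf. Olver). Using $Q(r)^{-1/4}\approx r^{-(\beta-\alpha)/4}$ and showing $\int_1^r \sqrt{Q}$ differs from $\int_1^r\sqrt{r^\beta/(1+r^\alpha)}\,dr$ by at most a bounded quantity, I obtain the desired exponential factor. Since the eigenfunction $\Phi$ is strictly positive, bounded, and tends to $0$ at infinity, it cannot contain any component of the growing solution $u_+$; thus $\phi$ must be a positive scalar multiple of the decaying Liouville--Green solution, which delivers the two-sided estimate $\Phi\approx\psi(r)u_-(r)$ on $\R^N\setminus B_1$.

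The main obstacle is the rigorous error control in the Liouville--Green step: I will need to verify that $Q>0$ for $r\geq 1$ (which may require a preliminary estimate using $\lambda_0<0$ and the signs of $p^2$, $p'$), and that Olver's smallness condition $\int^\infty |Q^{-1/4}(Q^{-1/4})''|\,dr<\infty$ holds, so that the error is uniformly bounded and the resulting $\approx$ is genuinely two-sided rather than only asymptotic. A secondary technical point is ensuring that the two natural exponents $\int_1^r\sqrt{Q}$ and $\int_1^r\sqrt{r^\beta/(1+r^\alpha)}\,dr$ differ by $O(1)$ as $r\to\infty$, so that their difference is absorbed into the multiplicative constants $C_1,C_2$ implicit in $\approx$.
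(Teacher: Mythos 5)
Your proposal is correct in spirit and would deliver the statement, but it takes a genuinely different route from the paper's. Both arguments begin from the same WKB ansatz, but they diverge at the crucial step of error control. The paper writes down the explicit candidate
$g_{\alpha,\beta,\lambda}(x)=|x|^{-\frac{N-1}{2}}(1+|x|^\alpha)^{-\frac{b}{2\alpha}}\mathfrak h^{-1/4}\exp\bigl\{-\int_1^{|x|}\mathfrak h^{1/2}-\int_1^{|x|}\mathfrak v_\lambda\bigr\}$, with $\mathfrak h(r)=\tfrac{r^\beta}{1+r^\alpha}$ the leading part of your $Q$, and then chooses the correction $\mathfrak v_\lambda(r)=\sum_{i=1}^k c_i r^{-i\xi-1}$ recursively so that the residual of the eigenvalue equation collapses to $\lambda r^{-2}+O(r^{-\alpha-2})+O(r^{-k\xi-2})$; after multiplying by $1+|x|^\alpha$, the term $\lambda|x|^{\alpha-2}$ dominates $\lambda_0$ for large $|x|$, so $\lambda=2\lambda_0$ produces an explicit supersolution and $\lambda=0$ a subsolution, and the two-sided bound for $\Phi$ follows from the classical maximum principle on $\R^N\setminus B_R$. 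Since $\int_1^\infty\mathfrak v_\lambda$ converges, $\mathfrak v_\lambda$ contributes only a bounded multiplicative factor, which is absorbed in the $\approx$. Your route instead invokes Olver's Liouville--Green theorem for $u''=Qu$ and identifies $\Phi$ with the decaying branch $u_-$; this is shorter and more conceptual, but it outsources the hard work to Olver's error-control hypothesis, which you have correctly flagged but not verified. For the record, the hypothesis does hold here: $Q^{-1/4}(Q^{-1/4})''\sim r^{-\frac{\beta-\alpha}{2}-2}$, which is integrable precisely when $\beta>\alpha-2$; and $Q>0$ for $r\ge R$ with $R$ large because $\frac{r^\beta+\lambda_0}{1+r^\alpha}\sim r^{\beta-\alpha}$ dominates the $O(r^{-2})$ contributions of $p^2/4$ and $p'/2$. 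Two further points you should make explicit to turn this into a proof: (i) the passage from asymptotic equivalence on $[R,\infty)$ to a genuine two-sided bound on $\R^N\setminus B_1$ uses continuity and strict positivity of $\Phi$ on the compact annulus $1\le|x|\le R$, exactly as the paper does at the end; and (ii) the claim that $\Phi$ cannot contain a $u_+$ component uses $\int_1^\infty\sqrt{Q}=\infty$, which holds since $\sqrt{Q}\sim r^{(\beta-\alpha)/2}$ with $(\beta-\alpha)/2>-1$. In short, the paper buys elementarity (only the maximum principle) at the cost of the recursive $\mathfrak v_\lambda$ bookkeeping; you buy brevity at the cost of a nontrivial ODE asymptotics theorem whose hypotheses must be, and can be, checked.
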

\begin{proof}
Let $g_{\alpha,\beta,\lambda}$ be the function defined as
\begin{equation}\label{g}
g_{\alpha,\beta,\lambda}(x)=|x|^{-\frac{N-1}{2}}(1+\vert x\vert^{\alpha})^{-\frac{b}{2\alpha}}\mathfrak{h}^{-\frac{1}{4}}(|x|)
\exp \left\{-\int_1^{|x|}\mathfrak{h}^{\frac{1}{2}}(s)ds -\int_1^{|x|}\mathfrak{v}_\lambda(s)ds \right\}\;,
\end{equation}
where
$\lambda \in \R ,\,\mathfrak{h}(r)=\frac{r^\beta}{1+r^\alpha}$,
and $\mathfrak{v}_\lambda$ is a smooth function to be chosen later on.
If we set
\begin{equation}\label{w}
w(r)=r^{\frac{N-1}{2}}(1+r^{\alpha})^{\frac{b}{2\alpha}}g_{\alpha,\beta,\lambda}(r),
\end{equation}
the calculation of $w'$ gives us
\begin{align*}
w'(r) = & \left( \frac{N-1}{2}\right)r^{\frac{N-1}{2}-1}(1+r^{\alpha})^{\frac{b}{2\alpha}}g_{\alpha,\beta,\lambda}(r)\\
&+ \frac{b}{2}r^{\frac{N-1}{2}+\alpha-1}(1+r^{\alpha})^{\frac{b}{2\alpha}-1}g_{\alpha,\beta,\lambda}(r) + r^{\frac{N-1}{2}}(1+r^{\alpha})^{\frac{b}{2\alpha}}g'_{\alpha,\beta,\lambda}(r).
\end{align*}

So, one obtains
\begin{equation}\label{w''1}
 w'=w\left( -\frac{\mathfrak{h}'}{4\mathfrak{h}}-\mathfrak{h}^{\frac{1}{2}}-\mathfrak{v}_\lambda\right)
\qquad
\hbox{and}\qquad
w''= w(g_1+m+\mathfrak{h}),
\end{equation}
where
\begin{equation}\label{R}
g_1=\frac{5}{16}\left(\frac{\mathfrak{h}'}{\mathfrak{h}}\right)^2-
\frac{\mathfrak{h}''}{4h}+\mathfrak{v}_\lambda^2+\mathfrak{v}_\lambda\left(\frac{\mathfrak{h}'}{2\mathfrak{h}}+
2\mathfrak{h}^{\frac{1}{2}}\right)-\mathfrak{v}_\lambda '-m
\end{equation}
and
$$
m(r):=\frac{(N-1)(N-3)}{4r^2} + \frac{b}{2}\left(N-2 + \alpha\right)\frac{r^{\alpha-2}}{1+r^{\alpha}} + \frac{b\alpha}{2}\left(\frac{b}{2\alpha}-1\right)\left(\frac{r^{\alpha-1}}{1+r^{\alpha}}\right)^2.
$$
On the other hand, by computing directly
 the second derivative of (\ref{w}) one has
\begin{align*}
&w''(r)=r^{\frac{N-1}{2}}(1+r^{\alpha})^{\frac{b}{2\alpha}}\left( g_{\alpha,\beta,\lambda}'' +
\left( \frac{N-1}{r} + b\frac{r^{\alpha-1}}{1+r^{\alpha}}\right) g_{\alpha,\beta,\lambda}'\right)  \\
+ &  r^{\frac{N-1}{2}}(1+r^{\alpha})^{\frac{b}{2\alpha}} \left[\frac{(N-1)(N-3)}{4r^2} + \frac{b}{2}\left(N-2 + \alpha\right)\frac{r^{\alpha-2}}{1+r^{\alpha}} + \frac{b\alpha}{2}\left(\frac{b}{2\alpha}-1\right)\left(\frac{r^{\alpha-1}}{1+r^{\alpha}}\right)^2\right] g_{\alpha,\beta,\lambda}.
\end{align*}
So, comparing with (\ref{w''1}) we get
\[
g_{\alpha,\beta,\lambda}'' +
\left( \frac{N-1}{r} + b\frac{r^{\alpha-1}}{1+r^{\alpha}}\right) g_{\alpha,\beta,\lambda}'=
\frac{r^\beta}{1+r^\alpha}g_{\alpha,\beta,\lambda}+g_1g_{\alpha,\beta,\lambda}.
\]
That is
\begin{equation}\label {Delta}
\Delta g_{\alpha,\beta,\lambda}(x) + b\frac{\vert x\vert^{\alpha-2}x}{1+|x|^\alpha}\cdot \nabla g_{\alpha,\beta,\lambda}(x) -\frac{|x|^\beta}{1+|x|^\alpha} g_{\alpha,\beta,\lambda}(x)
 =g_1(|x|)g_{\alpha,\beta,\lambda}(x).
\end{equation}
To evaluate the function $g_1$ we set $\xi=\frac{\beta-\alpha}{2}+1$, which is positive thanks to the assumption $\beta>\alpha-2$.
We have
\begin{align*}
\frac{\mathfrak{h}'}{\mathfrak{h}}& =  \frac{\beta r^{-1}(1+r^{\alpha}) - \alpha r^{\alpha-1}}{1+r^{\alpha}} \\ 
&=\frac{\beta-\alpha}{r} + \frac{1}{r}\left(\frac{\alpha}{1+r^{\alpha}} \right) =  \frac{1}{r}(\beta-\alpha)+\frac{1}{r}O(r^{-\alpha}).
\end{align*}
By the same argument we obtain
\[
\frac{\mathfrak{h}''}{\mathfrak{h}}=\frac{1}{r^2}(\beta-\alpha)(\beta-\alpha-1)+\frac{1}{r^2}O(r^{-\alpha}).
\]

Then (\ref{R}) is reduced to
\begin{eqnarray}\label{eq:errore-R2}
g_1(r) &=& -\mathfrak{v}_\lambda '+\frac{\mathfrak{v}_\lambda}{r}
    \left( \xi-1+O(r^{-\alpha})+2r^\xi \sqrt{\frac{r^\alpha}{1+r^\alpha}}\right)+\mathfrak{v}_\lambda^2 \nonumber \\
    & & +\frac{c_0}{r^2}+\frac{1}{r^2}\left(O(r^{-\alpha})+O(r^{-2\alpha})\right)\nonumber  + \frac{b}{2}\left(N-2 + \alpha\right)
    \frac{1}{r^2(1+r^\alpha)}
   \nonumber \\
  & & + \frac{b\alpha}{2}\left(\frac{b}{2\alpha}-1\right)
  \frac{1+2r^\alpha}{r^2(1+r^\alpha)^2} \nonumber \\
&=& -\mathfrak{v}_\lambda '
+\frac{\mathfrak{v}_\lambda}{r}\left(
	\xi-1+O(r^{-\alpha})+2r^\xi -2r^{\xi} \frac{  (1+r^\alpha)^{1/2}-r^{ \alpha/2 }   }{  (1+r^\alpha)^{1/2} }
	\right)\nonumber \\
&&+\mathfrak{v}_\lambda^2 +\frac{c_0}{r^2}+\frac{1}{r^2}O(r^{-\alpha})\nonumber
 \nonumber \\
&=& -\mathfrak{v}_\lambda '
+\frac{\mathfrak{v}_\lambda}{r}\left( \xi-1+2r^\xi +(1+r^\xi)O(r^{-\alpha})\right)
+\mathfrak{v}_\lambda^2\nonumber \\
& & +\frac{c_0}{r^2}+\frac{1}{r^2}O(r^{-\alpha}),
\nonumber \\
\end{eqnarray}
where
$$c_0=c_0(\xi)=\left( \frac{\xi-1}{2} \right)^2+\frac{\xi-1}{2}
-\frac{(N-1)(N-3)}{4}- \frac{b}{2}\left(N-2 + \alpha\right)- \frac{b\alpha}{2}\left(\frac{b}{2\alpha}-1\right) \nonumber \\ .$$ 
So, if we take in (\ref{eq:errore-R2})
\[
 \mathfrak{v}_\lambda(r)=\sum_ {i=1}^k c_i\frac{1}{r^{i\xi+1}},\quad r\ge 1,
\]
we obtain, as in the proof of \cite[Proposition 2.2]{AC2-AR2-CT2},
\begin{eqnarray*}
r^2g_1(r)
&=&\sum _{i=2}^{k-1}\left[c_i\xi(i+1)+2c_{i+1}+\sum_{j+s=i}c_jc_s \right]\frac{1}{r^{i\xi}}+(2c_1\xi+2c_2)r^{-\xi}\\
& &  +c_k\xi (k+1)\frac{1}{r^{k\xi}}+2c_1+\sum _{i+j\geq k}\frac{c_ic_j}{r^{(i+j)\xi}}
  +c_0+O(r^{-\alpha}),
\end{eqnarray*}
where $k\ge 3$ be chosen later. Again as in \cite{AC2-AR2-CT2},
 we can choose $c_1,\dots,c_{k}$ such that $$2c_1+c_0=\lambda ,\,2c_1\xi +2c_2=0
\hbox{\ and }\left[\xi(i+1)c_i+2c_{i+1}+\sum_{j+s=i}c_jc_s \right]=0$$ for $i=2,\cdots, k-1$
and obtain
\begin{align*}
r^2g_1(r) & =
\lambda+c_k\xi (k+1)\frac{1}{r^{k\xi}}+\sum_{i+j\ge k}\frac{c_ic_j}{r^{(i+j)\xi}}+ O(r^{-\alpha}).
\end{align*}
Hence,
\begin{align*}
g_1(r) & =O\left(\frac{1}{r^{k\xi+2} }\right)+O\left(\frac{1}{r^{\alpha +2} }\right)+\frac{\lambda}{r^2}.
\end{align*}

Since $\xi>0$, there exists a natural number $k\ge 3$ such that $k\xi+2-\alpha>0$. So, we have
\begin{eqnarray*}
(1+|x|^{\alpha}) \Delta g_{\alpha,\beta,\lambda}(x) &+& b\vert x\vert^{\alpha-2}x\cdot \nabla g_{\alpha,\beta,\lambda}(x)  -|x|^{\beta} g_{\alpha,\beta,\lambda}(x)\\
    &=& o(1)g_{\alpha,\beta,\lambda}(x)+\lambda \frac{1+|x|^{\alpha}}{|x|^2}g_{\alpha,\beta,\lambda}(x).
     \end{eqnarray*}
For $\Phi$ we know that
\begin{equation}\label{eq:eq-aut-diviso}
 \Delta \Phi + b\frac{|x|^{\alpha-2}}{1+|x|^{\alpha}}x\cdot \nabla \Phi -\frac{|x|^\beta}{1+|x|^\alpha}\Phi-\frac{\lambda_0}{1+|x|^\alpha}\Phi =0\;.
\end{equation}
Since $\alpha-2> 0$ and $\lambda_0<0$, for $|x|$ large enough we have
 $$o(1)+2\lambda_0 \frac{1+|x|^{\alpha}}{|x|^2} <\lambda_0.$$
Thus,
\begin{equation}\label{eq:f-2lambda-diviso}
\Delta g_{\alpha,\beta,2\lambda_0}(x) +b\frac{|x|^{\alpha-2}}{1+|x|^{\alpha}}x\cdot \nabla g_{\alpha,\beta,2\lambda_0}(x)  -\frac{|x|^{\beta}}{1+|x|^\alpha} g_{\alpha,\beta,2\lambda_0}(x)
   -\frac{\lambda_0}{1+|x|^\alpha} g_{\alpha,\beta,2\lambda_0}(x)< 0
\end{equation}
for all $x\in \R^N\setminus B_R$ for some $R>0$.
Comparing \eqref{eq:eq-aut-diviso} and \eqref{eq:f-2lambda-diviso}, in $\R^N\setminus B_R$
we have
\begin{eqnarray*}
\Delta (g_{\alpha,\beta,2\lambda_0}-C\Phi)+ b\frac{|x|^{\alpha-2}}{1+|x|^{\alpha}}x\cdot \nabla (g_{\alpha,\beta,2\lambda_0}(x)-C\Phi)
& < &
\frac{\lambda_0+|x|^\beta}{1+|x|^{\alpha}}(g_{\alpha,\beta,2\lambda_0}-C\Phi)
\end{eqnarray*}
 for any constant $C>0$. Since $\beta>0$, we deduce that
$$
\mathcal{W}(x):=\frac{\lambda_0+|x|^\beta}{1+|x|^{\alpha}}>0
$$
for $|x|$ large enough.
Note that both $g_{\alpha,\beta,2\lambda_0}$ and $\Phi$ go to $0$ as $|x|\to \infty$
and since
there exists $C_2$ such that $\Phi\le C_2 g_{\alpha,\beta,2\lambda_0}$ on $\partial B_R$, we can apply the maximum principle to the problem
$$\left\{\begin{array}{ll}
\left(\Delta + b\frac{|x|^{\alpha-2}}{1+|x|^{\alpha}}x\cdot \nabla\right) z(x)-\mathcal{W}(x)z(x)<0 \,\,\quad \hbox{\ in }\R^N\setminus B_R ,\\
z(x)\ge 0 \hspace*{3cm}\hbox{\ in }\partial B_R ,\\
\lim_{|x|\to \infty}z(x)=0,
\end{array} \right.
$$ where $z:=g_{\alpha,\beta,2\lambda_0}-C_2^{-1}\Phi$,
to obtain that
$\Phi\leq C_2 g_{\alpha,\beta,2\lambda_0}$ in $\R^N\setminus B_R$ (and by continuity in $\R^N\setminus B_1$). Here we apply the classical maximum principle on bounded domains,
since $\lim_{|x|\to \infty}z(x)=0$, cf. \cite[Theorem 3.5]{DG-NT}. Then,
\begin{align*}
\Phi(x) &\leq C_2 |x|^{-\frac{N-1}{2}-\frac{1}{4}(\beta-\alpha) }(1+\vert x\vert^{\alpha})^{-\frac{b}{2\alpha}}
\exp\left\{-\int_1^{|x|}\sqrt{\frac{r^\beta}{1+r^\alpha}}\,dr\right\}
\exp\left\{-\int_1^{|x|} \mathfrak{v}_{2\lambda_0}(r)dr\right\}\\
&\leq C_3 |x|^{-\frac{N-1}{2}-\frac{1}{4}(\beta-\alpha) }(1+\vert x\vert^{\alpha})^{-\frac{b}{2\alpha}}
\exp\left\{-\int_1^{|x|}\sqrt{\frac{r^\beta}{1+r^\alpha}}\,dr\right\},
\end{align*}
\begin{eqnarray}\label{extra-justi}
\lim_{|x|\to \infty}\int_1^{|x|} \mathfrak{v}_\lambda(r)\,dr &=&
\lim_{|x|\to \infty}\sum_{j=1}^k\frac{c_j}{j\xi}(1-|x|^{-j\xi})\nonumber \\
&=& \sum_{j=1}^k\frac{c_j}{j\xi}.
\end{eqnarray}

As regards lower bounds of $\Phi $, we observe that
\begin{align*}
 & \Delta g_{\alpha,\beta,0}(x) + b\frac{|x|^{\alpha-2}}{1+|x|^{\alpha}}x\cdot \nabla g_{\alpha,\beta,0}(x) -\frac{|x|^{\beta}}{1+|x|^\alpha}g_{\alpha,\beta,0}(x) \\
 & = \frac{o(1)}{1+|x|^\alpha}g_{\alpha,\beta,0}(x) > \frac{\lambda_0}{1+|x|^\alpha}g_{\alpha,\beta,0}(x)
\end{align*}
if $|x|\ge R$ for some suitable $R>0$.
Then,
\[
\Delta g_{\alpha,\beta,0}(x) + b\frac{|x|^{\alpha-2}}{1+|x|^{\alpha}}x\cdot \nabla g_{\alpha,\beta,0}(x)>
\frac{|x|^{\beta}}{1+|x|^\alpha}g_{\alpha,\beta,0}(x)+
\frac{\lambda_0}{1+|x|^\alpha}g_{\alpha,\beta,0}(x).
\]
Since $\Delta \Phi + b\frac{|x|^{\alpha-2}}{1+|x|^{\alpha}}x\cdot \nabla \Phi -\frac{|x|^\beta}{1+|x|^\alpha}\Phi-\frac{\lambda_0}{1+|x|^\alpha}\Phi =0\ $ we obtain
\begin{align*}
&\Delta (g_{\alpha,\beta,0}-\Phi) + b\frac{|x|^{\alpha-2}}{1+|x|^{\alpha}}x\cdot \nabla (g_{\alpha,\beta,0}(x) - \Phi) > \frac{|x|^\beta+\lambda_0}{1+|x|^\alpha}(g_{\alpha,\beta,0}-\Phi).
\end{align*}
Note that $|x|^\beta+\lambda_0$ is positive for $|x|\ge R$ and, arguing as above,
by the maximum principle and using \eqref{extra-justi} we have
\[
\Phi(x) \ge C_1g_{\alpha,\beta,0}(x)\geq C_1 |x|^{-\frac{N-1}{2}-\frac{1}{4}(\beta-\alpha) }(1+\vert x\vert^{\alpha})^{-\frac{b}{2\alpha}}
\exp\left\{-\int_R^{|x|}\sqrt{\frac{r^\beta}{1+r^\alpha}}\,dr\right\}
\]
for $|x|\ge R$. Since $0<\Phi \in C(\R^N)$, by modifying the constant $C_1$, we can see that, the above lower estimate of $\Phi$ remain valid for $1\le |x|\le R$.
\end{proof}
\begin{remark}
\begin{itemize}
\item[1.] If in the above proposition we take $b=0$, then we obtain exactly the upper and lower estimates for the ground state $\psi$ associated to the operator $(1+|x|^\alpha)\Delta -|x|^\beta$  established in \cite[Proposition 2.2]{AC2-AR2-CT2}.
\item[2.] If we denote by $\Phi_c$ the eigenfunction of the largest eigenvalue of $A_{b,c}$, then one can see that
$$\Phi_c\approx |x|^{-\frac{N-1}{2}-\frac{1}{4}(\beta-\alpha) }(1+\vert x\vert^{\alpha})^{-\frac{b}{2\alpha}}
    e^{-\int_1^{|x|}\sqrt{\frac{cr^\beta}{1+r^\alpha}}dr}
$$
on $\R^N\setminus B_1$.
\end{itemize}
\end{remark}
\section{Intrinsic ultracontractivity and heat kernel estimates}\label{form amu}

In this section we prove heat kernel estimates for $T_p(\cdot)$ through the relationship between
the log-Sobolev  inequality and the ultracontractivity of a suitable semigroup in a weighted $L^2$-space.


Consider the Hilbert spaces $L^2_\mu=L^2(\R^N,d\mu)$ with $d\mu (x)=\frac{1}{1+|x|^{\alpha}}dx$.
Define the function $\phi(x)=\left(1+|x|^\alpha\right)^{\frac{b}{2\alpha}}$ and the multiplication operator $T:\,L^2_{\phi^2\mu}\to L^2_\mu$ defined by $Tu=\phi u$.
The operator $A$
defined above can be written in the following way
\[
A=T^{-1}HT,
\]
where $H=(1+|x|^\alpha)\Delta -U$ and the potential
$U=(1+|x|^{\alpha})\frac{\Delta \phi}{\phi}+|x|^\beta$.
An easy computation gives us
\[
U=|x|^{\alpha-2}\frac{b}{2}\left( \frac{|x|^{\alpha}}{1+|x|^{\alpha}}\left( \frac{b}{2}-\alpha \right)+N+\alpha-2 \right)+|x|^\beta ,
\]
from which we can deduce that $U$ is bounded from below, since $\beta >\alpha -2$.

Since, for every $v\in C_c^\infty(\R^N)$, we have
$$\int_{\R^N}\left(|\nabla v|^2+\frac{\Delta \phi}{\phi}|v|^2\right)dx+\int_{\R^N}|x|^\beta |v|^2d\mu\\
=\int_{\R^N}\left| \nabla\left( \frac{v}{\phi} \right)\right|^2\phi^2dx  +\int_{\R^N}|x|^\beta|v|^2d\mu \geq 0,
$$
 we can associate to $H$ in $L^2_\mu$ the bilinear form $h$ defined by
\begin{equation}\label{formaquad}
h(v,w)=\int_{\R^N}\nabla v\cdot \nabla \overline w dx+\int_{\R^N}Uv\overline w d\mu
\end{equation}
on $D(h )=\overline{C_c^\infty(\R^N)}^{\|\cdot \|_{\mathcal H}}$ with $\mathcal H$ the Hilbert space
$$ {\mathcal H}=
\{v\in L^2_\mu \cap W_{loc}^{1,2}\left( \R^N \right): |U|^{1/2}v\in L^2_{\mu},\, \nabla v\in (L^2(\R^N))^N\}$$ endowed with the inner product
$$
\langle v,w\rangle_{{\mathcal H}}=\int_{\R^N}(1+U)v\overline{w}\,d\mu +\int_{\R^N}\nabla v\cdot \nabla \overline{w}\,dx.
$$
So, one can prove the following proposition
\begin{proposition}
The form $(h,D(h))$ is symmetric, continuous, closed and accretive.
\end{proposition}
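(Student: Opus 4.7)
The plan is to verify each of the four properties in turn, using the integration-by-parts identity displayed immediately before the proposition as the main tool.

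\textbf{Symmetry} is immediate because $U$ is real-valued, so swapping $v$ and $w$ in the two integrals defining $h$ produces the complex conjugate. For \textbf{accretivity}, the cited identity rewrites, for $v\in C_c^\infty(\R^N)$,
$$h(v,v) = \int_{\R^N}\Big|\nabla\Big(\frac{v}{\phi}\Big)\Big|^2\phi^2\,dx + \int_{\R^N}|x|^\beta |v|^2\,d\mu \geq 0,$$
since $U=(1+|x|^\alpha)\tfrac{\Delta\phi}{\phi}+|x|^\beta$ and $d\mu=dx/(1+|x|^\alpha)$. Density of $C_c^\infty(\R^N)$ in $D(h)$ together with the continuity proved next extends the inequality to every $v\in D(h)$, hence $\Rp h(v,v)\geq 0$.

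For \textbf{continuity}, the explicit expression for $U$ recalled above, combined with $\alpha>2$ and $\beta>\alpha-2$, shows that $U$ is bounded from below by some constant $-M$ with $M\geq 0$. Consequently $|U|\leq U+2M$, so by Cauchy--Schwarz
$$|h(v,w)| \leq \|\nabla v\|_{L^2}\|\nabla w\|_{L^2} + \Big(\int_{\R^N}|U||v|^2\,d\mu\Big)^{1/2}\Big(\int_{\R^N}|U||w|^2\,d\mu\Big)^{1/2},$$
and since $\int|U||v|^2\,d\mu \leq \int(1+U)|v|^2\,d\mu + 2M\|v\|^2_{L^2_\mu}$, each factor is controlled by an equivalent shifted $\mathcal H$-norm. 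This yields $|h(v,w)|\leq C\,\|v\|_{\mathcal H}\|w\|_{\mathcal H}$ for a suitable $C>0$.

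\textbf{Closedness} is essentially built into the definition of $D(h)$: a direct expansion gives
$$\|v\|_{\mathcal H}^2 = \int_{\R^N}(1+U)|v|^2\,d\mu + \int_{\R^N}|\nabla v|^2\,dx = \|v\|^2_{L^2_\mu} + h(v,v),$$
so the form norm $(h(v,v)+\|v\|^2_{L^2_\mu})^{1/2}$ coincides with $\|\cdot\|_{\mathcal H}$ on $D(h)$. Since $D(h)=\overline{C_c^\infty(\R^N)}^{\|\cdot\|_{\mathcal H}}$ is by definition a closed subspace of the Hilbert space $\mathcal H$, it is complete in the form norm, which is precisely closedness of $h$. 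The only delicate point is that $U$ may change sign, so $(1+U)$ need not be pointwise non-negative; this is handled by working throughout with an equivalent norm shifted by a multiple of the $L^2_\mu$-norm, which is legitimate thanks to the lower bound on $U$.
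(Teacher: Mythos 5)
The paper offers no proof of this proposition — it is stated immediately after the paragraph introducing the form $h$ and the space $\mathcal H$, with only the phrase ``So, one can prove the following proposition.'' Your argument therefore cannot be compared against the authors' own; judged on its merits, it is correct and supplies exactly the kind of routine verification the paper implicitly delegates to the reader. Symmetry is trivial since $U$ is real, accretivity follows from the displayed identity $h(v,v)=\int|\nabla(v/\phi)|^2\phi^2\,dx+\int|x|^\beta|v|^2\,d\mu\ge 0$ on $C_c^\infty(\R^N)$ and extends by density, continuity follows from Cauchy--Schwarz plus $|U|\le U+2M$, and closedness reduces to the observation that the form norm $\bigl(h(v,v)+\|v\|_{L^2_\mu}^2\bigr)^{1/2}$ coincides with $\|\cdot\|_{\mathcal H}$ on $D(h)=\overline{C_c^\infty(\R^N)}^{\|\cdot\|_{\mathcal H}}$, which is complete by construction.

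The one point worth sharpening in your last paragraph: the issue is not merely that $(1+U)$ may change sign, but that the sesquilinear expression the paper calls the ``inner product'' on $\mathcal H$ need not be positive definite on all of $\mathcal H$ when $U<-1$ on a set of positive measure, so $\mathcal H$ is a Hilbert space only after replacing $\langle\cdot,\cdot\rangle_{\mathcal H}$ by the shifted, genuinely positive product $\langle v,w\rangle_{\mathcal H'}=\int(1+2M+U)v\bar w\,d\mu+\int\nabla v\cdot\nabla\bar w\,dx$ (with $U\ge -M$). On $D(h)$ the two are equivalent precisely because $h\ge 0$ there forces $\|v\|_{\mathcal H}^2\ge\|v\|_{L^2_\mu}^2$, so the closedness argument goes through once this is spelled out. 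This is a genuine (if small) gap in the paper's own setup that your proof correctly flags; making the shift explicit would close it entirely.
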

Similarly to \cite[Lemma 9.1]{Me-Sp-Ta}  we have
\begin{lemma}
The injection of $D(h)$ in $L_\mu^2$ is compact.
\end{lemma}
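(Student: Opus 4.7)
The plan is to run the classical Rellich-type argument: local $W^{1,2}$-compactness plus a uniform tail estimate coming from the growth of the potential $U$.

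First I would take a sequence $(v_n)\subset D(h)$ that is bounded in the $\mathcal H$-norm; thus both $\int_{\R^N}(1+U)|v_n|^2\,d\mu$ and $\int_{\R^N}|\nabla v_n|^2\,dx$ are uniformly bounded. Fix a ball $B_R$. On $B_R$ the weight $\frac{1}{1+|x|^\alpha}$ is bounded below by $(1+R^\alpha)^{-1}$, so the bound on $\int |v_n|^2\,d\mu$ gives a bound on $\int_{B_R}|v_n|^2\,dx$. Together with the global bound on $\nabla v_n$ in $L^2(\R^N)$, this shows $(v_n)$ is bounded in $W^{1,2}(B_R)$. By Rellich--Kondrachov, a subsequence converges in $L^2(B_R)$, and on $B_R$ the Lebesgue and $\mu$-topologies coincide, so it also converges in $L^2(B_R,d\mu)$. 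A standard diagonal extraction over a sequence $R_k\to\infty$ produces a subsequence, still denoted $(v_n)$, converging in $L^2_{loc}(\R^N,d\mu)$ to some $v$.

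Next I would establish the uniform tail estimate, which is where the growth of $U$ enters. From the explicit formula
\[
U(x)=|x|^{\alpha-2}\frac{b}{2}\left(\frac{|x|^\alpha}{1+|x|^\alpha}\Bigl(\frac{b}{2}-\alpha\Bigr)+N+\alpha-2\right)+|x|^\beta,
\]
and the hypothesis $\beta>\alpha-2$, one reads off that $U(x)\to+\infty$ as $|x|\to\infty$; more precisely, there exist $R_0>0$ and $c>0$ such that $U(x)\ge c|x|^\beta$ for $|x|\ge R_0$. Therefore, for $R\ge R_0$,
\[
\int_{|x|>R}|v_n|^2\,d\mu\le \frac{1}{cR^\beta}\int_{|x|>R}U|v_n|^2\,d\mu\le \frac{C}{R^\beta},
\]
uniformly in $n$. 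Given $\eps>0$ I choose $R$ so large that this tail is $<\eps$, and then use $L^2_{loc}(d\mu)$-convergence on $B_R$ to conclude $\|v_n-v\|_{L^2_\mu}\to 0$. Hence the embedding is compact.

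The only genuine point to watch is the tail step: one has to check that the piece of $U$ involving $|x|^{\alpha-2}$ (whose sign depends on $b$) cannot spoil the lower bound $U\ge c|x|^\beta$ at infinity. This is precisely where the assumption $\beta>\alpha-2$ is used, since it forces the $|x|^\beta$ term to dominate every other term in $U$ for large $|x|$, regardless of the sign of $b$. Everything else is a routine localization and diagonal-extraction argument.
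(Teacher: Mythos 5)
Your proof is correct. The paper itself does not give a proof of this lemma; it simply appeals to \cite[Lemma 9.1]{Me-Sp-Ta}, and the classical Rellich--Kondrachov-plus-uniform-tail argument you give (local $W^{1,2}$-compactness combined with the tail bound $U(x)\ge c|x|^\beta$ for $|x|$ large, which follows from $\beta>\alpha-2$ regardless of the sign of $b$, together with $\int_{\R^N}U|v_n|^2\,d\mu\le h(v_n,v_n)\le C$ and a diagonal extraction) is precisely the standard route the cited lemma takes, so there is no genuine divergence in approach.
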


Since the bilinear form $h$ is densely defined, accretive, continuous, closed and symmetric, one can associate the self-adjoint, dissipative operator $H_{\mu}$ defined by
\begin{eqnarray*}
\begin{array}{l}
D(H_{\mu})=\displaystyle\left\{v\in D(h) : \exists f\in L^2_\mu\,\, \mbox{ s.t. }
    h(v,w)=-\int_{\R^N}f\overline{w}\,d\mu,\,\mbox{ for every } w\in D(h)\right\},\\[3mm]
H_{\mu} v=f,
\end{array}
\end{eqnarray*}
see e.g. \cite[Prop. 1.24]{Ouhabaz}. Hence, $H_{\mu}$ generates a positive and analytic semigroup $(e^{tH_{\mu}})_{t\ge 0}$ in $L^2_\mu$, cf. \cite[Theorem 1.52 and Theorem 2.6]{Ouhabaz}.


The following result gives the relationship between $H_\mu$ and $A_p$.

\begin{lemma} \label{coerenza1} The following holds
\begin{equation}\label{DAmu}
D(H_{\mu})= \left\{u \in D(h) \cap W^{2,2}_{loc}(\R^N):
(1+|x|^\alpha) \Delta v -Uv \in L^2_\mu \right\}
\end{equation}
and $H_{\mu}v=(1+|x|^\alpha)\Delta v-U(x)v $ for $v \in D(H_{\mu})$. Moreover, if
$\lambda >\lambda '$ where $\lambda '$ is a suitable positive constant and
$f \in C_c^{\infty}(\R^N)$,
then
$$
T^{-1}(\lambda-H_{\mu})^{-1}Tf=(\lambda-A_2)^{-1} f.
$$
\end{lemma}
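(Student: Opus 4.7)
The plan is to establish the domain characterization \eqref{DAmu} first, via integration by parts and interior elliptic regularity, and then derive the resolvent identity by exploiting the formal conjugation $A=T^{-1}HT$ together with uniqueness.

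For the inclusion ``$\supseteq$'' in \eqref{DAmu}, I would take $v\in D(h)\cap W^{2,2}_{\mathrm{loc}}(\R^N)$ with $f:=(1+|x|^\alpha)\Delta v-Uv\in L^2_\mu$ and, for $w\in C_c^\infty(\R^N)$, integrate by parts; since $dx=(1+|x|^\alpha)\,d\mu$, this yields $h(v,w)=-\int_{\R^N}f\ov w\,d\mu$. Because $D(h)$ is the closure of $C_c^\infty(\R^N)$ in $\mathcal H$ and $h$ is continuous on $D(h)\times D(h)$, the identity extends to every $w\in D(h)$, so $v\in D(H_\mu)$ and $H_\mu v=f$. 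For the reverse inclusion I would start from $v\in D(H_\mu)$ with $H_\mu v=f$ and test the form identity on $w\in C_c^\infty(\R^N)$; this is exactly the distributional equation $-\Delta v=-(Uv+f)/(1+|x|^\alpha)$, whose right-hand side lies in $L^2_{\mathrm{loc}}(\R^N)$. Interior $W^{2,2}$-regularity for the Laplacian then forces $v\in W^{2,2}_{\mathrm{loc}}(\R^N)$, and the equation becomes the pointwise identity $(1+|x|^\alpha)\Delta v-Uv=f\in L^2_\mu$, proving both \eqref{DAmu} and the formula for $H_\mu$.

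For the resolvent identity, fix $f\in C_c^\infty(\R^N)$ and $\lambda>\lambda'$ with $\lambda'$ to be chosen large enough, set $u:=(\lambda-A_2)^{-1}f\in D_2(A)$, and aim to prove that $v:=Tu=\phi u$ lies in $D(H_\mu)$ with $(\lambda-H_\mu)v=\phi f$. Injectivity of $\lambda-H_\mu$ on its domain then gives $v=(\lambda-H_\mu)^{-1}(Tf)$, i.e.\ $u=T^{-1}(\lambda-H_\mu)^{-1}Tf$. The conjugation $A=T^{-1}HT$ on smooth functions supplies the pointwise relation $(1+|x|^\alpha)\Delta v-Uv=Hv=\phi\,Au$, so once $v\in D(H_\mu)$ is established we automatically get $(\lambda-H_\mu)v=\phi(\lambda u-Au)=\phi f$. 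By the first part of the lemma, the membership $v\in D(H_\mu)$ reduces to checking $v\in D(h)\cap W^{2,2}_{\mathrm{loc}}(\R^N)$, the latter being immediate since $u\in W^{2,2}(\R^N)$ and $\phi\in C^\infty$.

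The main obstacle is therefore verifying that $v=\phi u\in D(h)$, i.e.\ the three integrability conditions $v\in L^2_\mu$, $|U|^{1/2}v\in L^2_\mu$ and $\nabla v\in (L^2(\R^N))^N$. Using $\phi\approx(1+|x|^\alpha)^{b/(2\alpha)}$, the bound $|U|\lesssim 1+|x|^\beta$ coming from the explicit formula for $U$, and the product rule $\nabla v=\phi\nabla u+u\nabla\phi$ with $|\nabla\phi|\lesssim|x|^{\alpha-1}(1+|x|^\alpha)^{b/(2\alpha)-1}$, each of the three quantities can be dominated by linear combinations of $\|u\|_{L^2}$, $\|(1+|x|^\alpha)^{1/2}\nabla u\|_{L^2}$ and $\||x|^\beta u\|_{L^2}$, all of which are finite on $D_2(A)$. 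The growth assumptions $\alpha>2$ and $\beta>\alpha-2$ ensure the residual power exponents at infinity are non-positive, so $v\in D(h)$ follows. With this verification in place, the argument closes.
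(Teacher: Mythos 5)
Your proof of the domain identity \eqref{DAmu} matches the paper's argument in both directions, so there is nothing to add there. The genuinely new part is the resolvent identity, where you bypass the paper's exhaustion-by-balls scheme and instead set $u:=(\lambda-A_2)^{-1}f$, $v:=\phi u$, and try to check directly that $v\in D(h)$. Two points here are not merely unwritten details but actual gaps.

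First, the integrability check does not close for general $b\in\R$. You claim the three quantities $\|v\|_{L^2_\mu}$, $\||U|^{1/2}v\|_{L^2_\mu}$, $\|\nabla v\|_2$ are dominated by combinations of $\|u\|_{L^2}$, $\|(1+|x|^\alpha)^{1/2}\nabla u\|_{L^2}$, $\||x|^\beta u\|_{L^2}$ and that the residual exponents at infinity are non-positive under $\alpha>2$, $\beta>\alpha-2$. That is false once $b$ is large. For instance $\int|\phi\nabla u|^2\,dx=\int(1+|x|^\alpha)^{b/\alpha}|\nabla u|^2\,dx$, and the only available weight on $\nabla u$ from $D_2(A)$ is $(1+|x|^\alpha)^{1}$, so you need $b\le\alpha$; similarly $\int|u\,\nabla\phi|^2\,dx$ behaves like $\int|x|^{b-2}u^2\,dx$ and forces $b\le 2\beta+2$, and $\|v\|^2_{L^2_\mu}=\int(1+|x|^\alpha)^{b/\alpha-1}u^2\,dx$ forces $b\le 2\beta+\alpha$. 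The lemma carries no upper bound on $b$, so your estimates, which use nothing about $u$ beyond membership in $D_2(A)$, cannot by themselves prove $\phi u\in\mathcal H$. The specific $u=(\lambda-A_2)^{-1}f$ with $f\in C_c^\infty$ does decay fast enough, but that decay is exactly what the ball-truncation in the paper is engineered to extract: the approximants $v_n=\phi u_n$ (with $u_n$ solving the Dirichlet problem on $B_n$) are compactly supported, and the a priori bounds on $\|v_n\|_{L^2_\mu}$, $\||U|^{1/2}v_n\|_{L^2_\mu}$, $\|\nabla v_n\|_2$ come from the variational identity \eqref{variat2}, which is $b$-uniform, not from the $D_2(A)$ norm of $u$.

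Second, even granting the integrability estimates, they only place $v$ in the Hilbert space $\mathcal H$, whereas $D(h)$ is defined as the closure of $C_c^\infty(\R^N)$ in $\mathcal H$; you never argue that $v$ lies in that closure. This is not automatic (one would need, say, that $C_c^\infty$ is a core for $A_2$ together with continuity of $u\mapsto\phi u$ from the graph norm into $\mathcal H$, or an independent density argument), and it is precisely the second thing the paper's construction delivers for free: each $v_n$ has compact support, so the limit $v$ is in the closure by construction. If you want to keep your direct route, you must close both of these gaps — most naturally by approximating $u$ in the $A_2$-graph norm by functions in $C_c^\infty$, transferring the estimates to the approximants, and establishing $b$-uniform bounds through the quadratic form rather than through the raw $D_2(A)$ norms.
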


\begin{proof}
Let us begin by proving (\ref{DAmu}). The first inclusion $"\subseteq "$ is obtained by local elliptic regularity and \eqref{formaquad}.\\
For the second inclusion $"\supseteq"$ let us take $v\in D(h) \cap W^{2,2}_{loc}(\R^N)$ such that $f:=(1+|x|^\alpha)\Delta v-U(x)v\in L^2_\mu$.
Integrating by parts we obtain
\begin{equation}\label{eq:int-part}
h(v,w)=-\int f\overline{w}d\mu ,\quad \forall w\in C_c^{\infty}(\R^N).
\end{equation}
By the density of $C_c^{\infty}(\R^N)$ in $D(h)$, \eqref{eq:int-part} holds for every $w\in D(h)$.
This implies that $v \in D(H_{\mu})$.

To prove the coherence of the resolvents,
we consider, for a positive function $f \in C_c^\infty(\mathbb{R}^N)$, the following  elliptic problem
\begin{equation}\label{elliptic 1}
\quad
\begin{cases}
&\lambda u -Au =f \quad x \in B_n ,\\
& u = 0 \qquad x \in \partial B_n.
\end{cases}
\end{equation}
Since the operator $A$ is uniformly elliptic in the ball $B_n$, it is known that (\ref{elliptic 1})
admits a unique solution $u_{n}$ in $W^{2,2}(B_n)\cap W_{0}^{1,2}(B_n)$, (cf. \cite[Theorem 9.15]{DG-NT}).
Likewise, as in \cite{Lo-Be}, the sequence $(u_n)$ is increasing, positive and converges to a function $u$ in $D_{max}(A)$ satisfying $\lambda u-Au=f$, where
$$ D_{max}(A)= \{ u \in C_{b}(\mathbb{R}^N)\cap W_{loc}^{2,p}(\mathbb{R}^N) \quad\text{for all}\quad 1\le p<\infty : Au \in C_{b}(\mathbb{R}^N) \}.$$

Setting $v_n=Tu_n$ and $g=Tf$ we have, by \eqref{elliptic 1},
\begin{equation}\label{elliptic 1H}
\lambda v_n -Hv_n =g \quad \hbox{\ on }B_n.
\end{equation}
Moreover  $v_n\in W^{2,2}(B_n)\cap W_{0}^{1,2}(B_n)$.

Multiplying in \eqref{elliptic 1H}  by $\overline{w}\mu$ for $w\in W_0^{1,2}(B_n)$ and integrating by parts we obtain
\begin{equation}\label{variationel}
\lambda\int_{B_n}v_n\overline{w}d\mu +  \int_{B_n}\nabla v_n\cdot \nabla \overline{w}\,dx + \int_{B_n}Uv_n\overline{w}\,d\mu = \int_{B_n}g\overline{w}d\mu.
\end{equation}
In particular we obtain
\begin{equation}\label{variat2}
\lambda\int_{B_n}v_n^2d\mu +  \int_{B_n}|\nabla v_n|^2\,dx + \int_{B_n}Uv_n^2\,d\mu = \int_{B_n}gv_nd\mu.
\end{equation}
Since $\int_{B_n}|\nabla v_n|^2\,dx + \int_{B_n}Uv_n^2\,d\mu\geq 0$, it follows from (\ref{variat2}) that
$$\Vert v_n\Vert_{L_\mu^2} \le \frac{1}{\lambda}\Vert g \Vert_{L_\mu^2} $$
By the monotone convergence theorem, we deduce that $\lim_{n\to +\infty}v_n=v$ in $L_\mu^2$.
Furthermore since $U$ is bounded from below we can choose $\tilde{\lambda}$ such that $\lambda +U\geq 0$ for $\lambda>\tilde{\lambda}$. Then, by \eqref{variat2}, we have
$$\| \left(\lambda+U\right)^{\frac{1}{2}} v_n\Vert_{L_\mu^2}^2 \le \Vert g \Vert_{L_\mu^2}\Vert v_n\Vert_{L_\mu^2}\le \frac{1}{\lambda}\Vert g \Vert_{L_\mu^2}^2.$$
Choosing $\lambda>\lambda ':=\max\{-2\min U,\tilde{\lambda},0\}$, we obtain
$|U|\leq |\lambda +U|$, and hence $|U|^{\frac{1}{2}}v_n\rightarrow |U|^{\frac{1}{2}} v$ in $L_\mu^2$.

Similarly  one finds
$$
\Vert \nabla v_n \Vert_2^2 \le \Vert g \Vert_{L_\mu^2}\Vert v_n\Vert_{L_\mu^2} \le \frac{1}{\lambda}\Vert g \Vert_{L_\mu^2}^2, \quad \forall \lambda >\lambda '.
$$
It follows that there exists a suitable subsequence $(v_{k_n})$ of $(v_n)$ such that $\nabla v_{k_n}$
converges weakly. So, $v\in \mathcal{H}$ and $v$ belongs to the closure in $\mathcal{H}$ of $W^{1,2}$-functions with compact support, which implies that $v\in D(h)$. Letting now $n\to +\infty$ in \eqref{variationel} we obtain
$$h(v,w)=-\langle \lambda v-g,w\rangle _{L^2_\mu},\quad \lambda >\lambda ',$$
for all $w\in W^{1,2}$ having compact support, and hence for all $w\in D(h)$. Thus, $v\in D(H_\mu)$ and $\lambda v-H_\mu v=g$ for all $\lambda >\lambda '$.
Therefore, since $v=Tu$ and $g=Tf$, it follows that
$$(\lambda -H_\mu)^{-1}Tf=T(\lambda -A)^{-1}f$$ for all $f\in C_c^\infty(\R^N)$ and $\lambda >\lambda '$. So, the statement follows now from \cite[Theorem 2]{boutiah et al}.
%
%
%
\end{proof}

The lemma stated above implies in particular that
\[
e^{tH_{\mu}}f(x)=\int_{\R^N}k_\mu(t,x,y)f(y)\,d\mu(y),\quad f\in L^2_\mu
\]
with
\begin{equation}\label{k-mu}
k_\mu(t,x,y)=\phi(x)k(t,x,y)\phi(y)^{-1}(1+|y|^\alpha),\quad t>0,\,x,\,y\in \R^N.
\end{equation}

As an application of the Proposition \ref{baondedofphi}, we have
\begin{proposition}\label{on-diagonal}
If $N>2,\,\alpha > 2$ and $\beta >\alpha -2$, then
\begin{eqnarray*}
k(t,x,x)\ge Ce^{\lambda_0 t}\left(|x|^{-\frac{N-1}{2}-\frac{1}{4}(\beta-\alpha) }(1+\vert x\vert^{\alpha})^{-\frac{b}{2\alpha}}
    e^{-\int_1^{|x|}\sqrt{\frac{r^\beta}{1+r^\alpha}}dr}\right)^2(1+|x|^\alpha)^{\frac{b}{\alpha}-1},\quad t>0,
\end{eqnarray*}
for all $x\in\R^N\setminus B_1$ and some constant $C>0$.
\end{proposition}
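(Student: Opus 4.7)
The strategy is to exploit the symmetric semigroup $(e^{tH_\mu})$ on $L^2_\mu$, obtain a diagonal lower bound for its kernel $k_\mu$ in terms of the ground state, and then translate this into a bound for $k$ using the identity \eqref{k-mu}. Finally, the lower estimate for $\Phi$ from Proposition \ref{baondedofphi} produces the explicit expression in the statement.

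\textbf{Step 1: eigenfunction in $L^2_\mu$.} Since $A=T^{-1}HT$, a direct computation shows that $\Psi:=T\Phi=\phi\,\Phi$ is a natural candidate eigenfunction of $H_\mu$ at eigenvalue $\lambda_0$. Using Lemma \ref{coerenza1} (coherence of resolvents: $(\lambda-H_\mu)^{-1}T=T(\lambda-A_2)^{-1}$), one deduces $(\lambda-H_\mu)^{-1}T\Phi=(\lambda-\lambda_0)^{-1}T\Phi$, so $\Psi\in D(H_\mu)$ and $H_\mu\Psi=\lambda_0\Psi$. The membership $\Psi\in L^2_\mu$ follows from Proposition \ref{baondedofphi}: the exponential decay of $\Phi$ dominates the polynomial growth of $\phi^2(1+|x|^\alpha)^{-1}$, so $\|\Psi\|_{L^2_\mu}^2=\int \phi^2\Phi^2 d\mu<\infty$.

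\textbf{Step 2: diagonal lower bound for $k_\mu$.} The form $h$ is symmetric, hence $H_\mu$ is self-adjoint and the kernel $k_\mu(t,\cdot,\cdot)$ is symmetric and non-negative. From the eigenvalue relation,
\begin{equation*}
e^{\lambda_0 t}\Psi(x)=(e^{tH_\mu}\Psi)(x)=\int_{\R^N}k_\mu(t,x,y)\Psi(y)\,d\mu(y).
\end{equation*}
Applying the Cauchy--Schwarz inequality in $L^2_\mu$ and using the semigroup property together with symmetry, namely $\int k_\mu(t,x,y)^2 d\mu(y)=k_\mu(2t,x,x)$, one obtains
\begin{equation*}
e^{2\lambda_0 t}\Psi(x)^2\leq k_\mu(2t,x,x)\,\|\Psi\|_{L^2_\mu}^2.
\end{equation*}
Replacing $2t$ by $t$ gives $k_\mu(t,x,x)\geq C\,e^{\lambda_0 t}\Psi(x)^2$ with $C=\|\Psi\|_{L^2_\mu}^{-2}$.

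\textbf{Step 3: passage to $k$ and insertion of the estimate of $\Phi$.} Setting $y=x$ in \eqref{k-mu} yields $k_\mu(t,x,x)=k(t,x,x)(1+|x|^\alpha)$, so
\begin{equation*}
k(t,x,x)\geq \frac{C\,e^{\lambda_0 t}\Psi(x)^2}{1+|x|^\alpha}=C\,e^{\lambda_0 t}\Phi(x)^2\,(1+|x|^\alpha)^{\frac{b}{\alpha}-1},
\end{equation*}
since $\phi(x)^2=(1+|x|^\alpha)^{b/\alpha}$. Substituting the lower bound of Proposition \ref{baondedofphi} for $\Phi(x)$ on $\R^N\setminus B_1$ gives the claimed inequality.

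\textbf{Main obstacle.} The only delicate point is Step~1: verifying that $\Psi=\phi\Phi$ is genuinely in $D(H_\mu)$ and represents the ground state of the self-adjoint operator $H_\mu$. This requires invoking the coherence of resolvents from Lemma \ref{coerenza1} and checking the $L^2_\mu$-integrability of $\phi\Phi$ via the decay of $\Phi$. The Cauchy--Schwarz argument for the diagonal bound is standard once the symmetry and non-negativity of $k_\mu$ are in place.
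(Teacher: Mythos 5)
Your proposal is correct and matches the paper's proof in essentially every step: you use the semigroup law and symmetry of $k_\mu$ to write $\int k_\mu(t,x,y)^2\,d\mu(y)=k_\mu(2t,x,x)$, apply Cauchy--Schwarz to the eigenfunction relation to get the diagonal lower bound, and then substitute the lower estimate of $\Phi$ from Proposition~\ref{baondedofphi}. The only cosmetic difference is that the paper starts from $T_2(t/2)\Phi=e^{\lambda_0 t/2}\Phi$ and passes to $k_\mu$ via the identity~\eqref{k-mu}, whereas you first verify (correctly, via the coherence of resolvents) that $\Psi=T\Phi$ is an $H_\mu$-eigenfunction and work with $e^{tH_\mu}\Psi$ directly; the two are equivalent and lead to the identical chain of inequalities.
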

\begin{proof}
The proof is based on the semigroup law and the symmetry of $k_\mu(t,\cdot ,\cdot)$ for $t>0$, and the semigroup law imply that
$$k_\mu(t,x,x)=\int_{\R^N}k_\mu(t/2,x,y)^2d\mu(y),\quad t>0,\,x\in \R^N.$$
By H\"older's inequality and \eqref{k-mu}, we deduce that
\begin{eqnarray*}
e^{\lambda_0\frac{t}{2}}\Phi(x)&=& T_2(t/2)\Phi(x)\\
&=& \int_{\R^N}k(t/2,x,y)\Phi(y)\,dy\\
&=& \phi(x)^{-1}\int_{\R^N}k_\mu(t/2,x,y)\phi(y)\Phi(y)\,d\mu(y)\\
&\le & \left(\int_{\R^N}k_\mu(t/2,x,y)^2d\mu(y)\right)^{\frac{1}{2}}\|\phi \Phi\|_{L^2_\mu}\phi(x)^{-1}\\
&=& k_\mu(t,x,x)^{\frac{1}{2}}\|\phi \Phi\|_{L^2_\mu}\phi(x)^{-1}
\end{eqnarray*}
for all $t>0$ and $x\in \R^N$. The assertion follows now from Proposition \ref{baondedofphi}.
\end{proof}

Now, in order to estimate $k_\mu$ we use the techniques in \cite[Chap 4]{Davies1}
\begin{proposition}\label{nash} There exist positive constants $ C_1,{C_2},C_3,C_4$ such that
   \begin{equation}\label{ip-2-rosen}
 \int _{\R^N} -\log (T\Phi) |v|^2d\mu\leq \eps h(v,v)+(C_1\eps^{-\gamma}+{C_2})\|v\|^2_{L^2_\mu},\quad v\in D(h),
\end{equation}
for any $\eps >0$ with $\gamma = \frac{\beta-\alpha+2}{\beta+\alpha-2}$, and
    \begin{equation}\label{ultra-mu}
\int_{\R^N}f|v|^2d\mu
	\leq C_3 \|f\|_{L_\mu^{N/2}}\left( h(v)+C_4\|v\|^2_{L^2_\mu}\right),\quad v\in D(h),\ f\in L^{N/2}_\mu ,
\end{equation}
\end{proposition}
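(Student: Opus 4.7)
The plan is to read off the pointwise size of $-\log(T\Phi)$ from the two-sided bounds on $\Phi$ in Proposition \ref{baondedofphi}, and then play this against the quadratic form $h$ by means of Young's inequality. Since $T\Phi=\phi\Phi$, Proposition \ref{baondedofphi} (combined with continuity and positivity of $\Phi$ on $\overline{B_1}$) yields a global bound of the form
$$
-\log(T\Phi)(x)\;\le\;C_0\bigl(1+|x|^{(\beta-\alpha+2)/2}\bigr),\qquad x\in\R^N,
$$
because the logarithmic contribution is absorbed into the polynomial and $\int_1^{|x|}\sqrt{r^\beta/(1+r^\alpha)}\,dr\sim\frac{2}{\beta-\alpha+2}|x|^{(\beta-\alpha+2)/2}$ as $|x|\to\infty$.

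Because $\beta>\alpha-2$ forces $(\beta-\alpha+2)/2<\beta$, the exponent $(\beta-\alpha+2)/2$ is subcritical relative to $\beta$ and Young's inequality with conjugate exponents $p=2\beta/(\beta-\alpha+2)>1$ and $p'=p/(p-1)$ delivers
$$
|x|^{(\beta-\alpha+2)/2}\;\le\;\eps\,|x|^\beta + C\,\eps^{-\gamma},
$$
where $1/(p-1)=(\beta-\alpha+2)/(\beta+\alpha-2)=\gamma$ is exactly the announced value. Meanwhile, the explicit formula for $U$ shows that $U-|x|^\beta$ grows only like $|x|^{\alpha-2}$ for large $|x|$, so a further routine Young step gives $\int_{\R^N}|x|^\beta|v|^2\,d\mu\le C\,h(v,v)+C'\|v\|_{L^2_\mu}^2$. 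Combining these three bounds and renaming the resulting small parameter gives \eqref{ip-2-rosen}.

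For \eqref{ultra-mu} I would reduce matters to the classical Sobolev embedding. H\"older's inequality with exponents $N/2$ and $N/(N-2)$ yields
$$
\int_{\R^N}f|v|^2\,d\mu\;\le\;\|f\|_{L^{N/2}_\mu}\,\|v\|_{L^{2N/(N-2)}_\mu}^2,
$$
and since $d\mu\le dx$ we have $\|v\|_{L^{2N/(N-2)}_\mu}\le\|v\|_{L^{2N/(N-2)}(\R^N)}$. The Sobolev embedding, valid since $N>2$, bounds the right-hand side by $C_S\|\nabla v\|_{L^2(\R^N)}^2$, while the lower bound $U\ge -M$ recorded in the excerpt yields $\|\nabla v\|_{L^2}^2\le h(v,v)+M\|v\|_{L^2_\mu}^2$. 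Chaining these three estimates produces \eqref{ultra-mu}.

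The delicate point is the calibration of Young's inequality so that the constant side carries exactly the exponent $\gamma=(\beta-\alpha+2)/(\beta+\alpha-2)$; this value is not negotiable because the subsequent intrinsic ultracontractivity bound $e^{c_2 t^{-\gamma}}$ (and ultimately the on-diagonal heat-kernel estimate) are fed directly by it through the log-Sobolev $\Rightarrow$ ultracontractivity machinery of Davies. Controlling $-\log(T\Phi)$ on the compact set $\{|x|\le 1\}$, absorbing the subcritical part of $U$, and reducing \eqref{ultra-mu} to the Euclidean Sobolev inequality are all routine once the correct pair $(p,p')$ has been identified.
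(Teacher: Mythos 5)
Your proposal is essentially correct and follows the same path as the paper's proof: for \eqref{ip-2-rosen} you read the pointwise growth of $-\log(T\Phi)$ from the lower bound on $\Phi$ and apply Young's inequality with exactly the same conjugate pair, and for \eqref{ultra-mu} your H\"older--Sobolev chain together with the lower bound on $U$ is verbatim the paper's argument. The one place you differ from the paper is in absorbing $\eps\int|x|^\beta|v|^2\,d\mu$ into $\eps\,h(v,v)$: you propose decomposing $U=|x|^\beta+(U-|x|^\beta)$, noting that $U-|x|^\beta=O(|x|^{\alpha-2})$ with $\alpha-2<\beta$, and applying a second Young step. This does work (after restricting to $\eps\le 1$, which is harmless since the inequality for large $\eps$ follows from the $\eps=1$ case by adjusting $C_2$), but it is a slight detour. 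The paper instead exploits the identity
$$
h(v,v)-\int_{\R^N}|x|^\beta|v|^2\,d\mu=\int_{\R^N}|\nabla v|^2\,dx+\int_{\R^N}\frac{\Delta\phi}{\phi}|v|^2\,dx=\int_{\R^N}\left|\nabla\left(\frac{v}{\phi}\right)\right|^2\phi^2\,dx\;\ge\;0,
$$
which was already recorded when $h$ was introduced. This gives $\int|x|^\beta|v|^2\,d\mu\le h(v,v)$ with constant exactly $1$ and no error term, so no further Young step or reparametrisation of $\eps$ is needed. Your route buys a bit of robustness (it would survive perturbations of $U$ that destroy the exact $\phi$-conjugation structure), while the paper's identity buys cleanliness; both arrive at the correct $\gamma=(\beta-\alpha+2)/(\beta+\alpha-2)$.
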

  \begin{proof}

To prove \eqref{ip-2-rosen}, we apply the lower estimate of $\Phi$ obtained in Proposition \ref{baondedofphi}
\[
 C_1|x|^{-\frac{N-1}{2}-\frac{1}{4}(\beta-\alpha) }
    e^{-\frac{2}{\beta-\alpha+2}|x|^{\frac{\beta-\alpha+2}{2}}}  \le C_1|x|^{-\frac{N-1}{2}-\frac{1}{4}(\beta-\alpha) }
    e^{-\int_1^{|x|}\sqrt{\frac{r^\beta}{1+r^\alpha}dr}} \le (T\Phi)(x)
\]
we get
\begin{eqnarray*}
 -\log(T\Phi) &\leq &-\log C_1 +\left( \frac{N-1}{2} + \frac{\beta-\alpha}{4} \right)\log |x| +\frac{2}{\beta -\alpha +2}|x|^{\frac{\beta-\alpha+2} {2}}\\
\end{eqnarray*}
for $|x|\ge 1$.
Setting $\xi=\frac{\beta-\alpha}{2}+1$ we have
\begin{eqnarray*}
 -\log(T\Phi) &\leq &-\log C_1 +\frac{1}{2}\left( N-2+\xi \right)\log |x| +\frac{1}{\xi}|x|^{\xi}\\
\end{eqnarray*}
As a consequence, there are positive constants $C_2,\,C_3$
such that
$$-\log(T\Phi) \leq C_2\left( 1+\frac{1}{\xi} \right)|x|^{\xi}+C_3,\quad x\in \R^N.$$
Since $\xi<\beta,\,\gamma=\frac{\xi}{\beta -\xi}$, by using Young's inequality,\footnote{$xy\le \varepsilon x^p + C_\varepsilon y^q$, $\forall x\ge 0\,\, \forall y \ge 0$ with $C_\varepsilon = \varepsilon^{-\frac{1}{p-1}}$ and $\frac{1}{p}+ \frac{1}{q} =1$.} it follows that
\[
C_2\left( 1+\frac{1}{\xi} \right)|x|^{\xi}\leq \eps |x|^\beta+C_4\eps^{-\frac{\xi}{\beta-\xi}} = \eps V(x)+ C_4\eps^{-\gamma}
\]
for all $\eps >0$ where
$C_4=\left( C_2 \left(1+\frac{1}{\xi}\right) \right)^{\frac{\beta}{\beta-\xi}}$.
Thus,
\[
 -\log(T\Phi)\leq \eps |x|^\beta +C_4\eps^{-\gamma}+C_3.
\]
Taking into account that
\[
 0\leq \int_{\R^N}|\nabla v|^2dx+\int_{\R^N}\frac{\Delta \phi}{\phi}|v|^2dx=h(v,v)-\int_{\R^N}|x|^\beta v^2d\mu ,\quad v\in D(h),
\]
we obtain
\[
\int _{\R^N} -\log (T\Phi) |v|^2 d\mu\leq \eps h(v,v)+(C_4\eps^{-\gamma}+{C_3})\int_{\R^N}|v|^2d\mu .
\]
This proves \eqref{ip-2-rosen}.

Concerning (\ref{ultra-mu}), by density, it suffices to show it for $v\in C_c^\infty(\R^N)$. Using H\"older and Sobolev's inequality we obtain
\begin{eqnarray*}
\int_{\R^N}fv^2d\mu &\leq & \| f\|_{L^{N/2}_{\mu}} \| v\|^2_{L^{\frac{2N}{N-2}}_{\mu}}
    \leq  \| f\|_{L^{N/2}_{\mu}} \| v\|^2_{L^{\frac{2N}{N-2}}}\\
&\leq & \| f\|_{L^{N/2}_{\mu}} \| \nabla v\|^2_2.
\end{eqnarray*}
So, one obtains (\ref{ultra-mu}) by observing that
\[
 \| \nabla v\|^2_2=h(v,v)-\int_{\R^N}U|v|^2d\mu\leq h(v,v)+C_5 \|v\|_{L^2_\mu}^2,
\]
where $C_5=-\left( 0\wedge\min_{x\in \R^N}U\right)$.
\end{proof}
We give now the estimate of $k_\mu(t,x,y)$.
\begin{theorem} Assume that $N>2,\,\alpha >2$ and $\beta >\alpha -2$. Then there exist $C_1,C_2$ positive constants such that
\begin{equation}\label{eq:stima-kmu}
 k_{\mu}(t,x,y)\leq  C_1e^{C_2 t^{-\gamma}}(T\Phi)(x)(T\Phi)(y),\quad 0<t\le 1,\,x,\,y\in \R^N.
\end{equation}
\end{theorem}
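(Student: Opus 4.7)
The strategy is to run the Davies intrinsic ultracontractivity argument from \cite[Chapter 4]{Davies1} for the self-adjoint semigroup $e^{tH_\mu}$. Because $A\Phi = \lambda_0\Phi$ and $A = T^{-1}HT$, the function $T\Phi = \phi\Phi$ is the ground state of $H_\mu$ in $L^2_\mu$, so the asserted estimate is precisely intrinsic ultracontractivity on short time intervals, with rate $e^{C_2 t^{-\gamma}}$.

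The first ingredient I would extract is a defective logarithmic Sobolev inequality for $h$, obtained from the Sobolev-type estimate \eqref{ultra-mu} by Rosen's classical interpolation trick (which, from the embedding of $D(h)$ into $L^{2N/(N-2)}_\mu$, produces a log-Sobolev inequality in the standard way):
\begin{equation*}
\int_{\R^N} v^2 \log \frac{v^2}{\|v\|^2_{L^2_\mu}}\, d\mu \leq \eps\, h(v,v) + \bigl(A \log(1/\eps) + B\bigr)\|v\|^2_{L^2_\mu}, \quad \eps \in (0,1],\ v \in D(h).
\end{equation*}

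I then combine this with \eqref{ip-2-rosen}. Writing a generic $v \in D(h)$ as $v = (T\Phi) f$ and using $\log v^2 = \log f^2 + \log(T\Phi)^2$, the sum of the two inequalities, after absorbing the $\log(1/\eps)$ term into $C\eps^{-\gamma}$ (which is valid because $\gamma > 0$) and re-expressing $h$ via the Dirichlet form $\tilde h$ of the ground state transformed operator $\tilde H := (T\Phi)^{-1}(H_\mu - \lambda_0)(T\Phi)$ on $L^2((T\Phi)^2 d\mu)$, takes the form
\begin{equation*}
\int f^2 \log \frac{f^2}{\|f\|^2_{L^2((T\Phi)^2 d\mu)}}\, (T\Phi)^2 d\mu \leq \eps\, \tilde h(f,f) + C\eps^{-\gamma}\|f\|^2_{L^2((T\Phi)^2 d\mu)}.
\end{equation*}

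Finally, I invoke the standard theorem of Davies \cite[Chapter 2]{Davies1} which converts a log-Sobolev inequality of this type (with $\beta(\eps) = C\eps^{-\gamma}$) into the ultracontractive bound $\|\tilde P_t\|_{2\to\infty} \leq C_1 e^{C_2 t^{-\gamma}}$ on $(0,1]$ for the Markovian semigroup $\tilde P_t f := e^{-\lambda_0 t}(T\Phi)^{-1} e^{tH_\mu}\bigl((T\Phi)f\bigr)$. Symmetry of $\tilde P_t$ together with the semigroup law $\tilde P_t = \tilde P_{t/2}\tilde P_{t/2}$ produces the kernel bound $\tilde k_t(x,y) \leq C_1 e^{C_2 t^{-\gamma}}$, which upon undoing the transform becomes \eqref{eq:stima-kmu} (the factor $e^{\lambda_0 t}$ arising from the transformation is bounded on $(0,1]$). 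The principal technical obstacle is the balancing in step two: one must ensure that the $\log(1/\eps)$ contribution from Rosen's inequality is dominated by the $\eps^{-\gamma}$ contribution from \eqref{ip-2-rosen} so that the sharp exponent $\gamma = \frac{\beta-\alpha+2}{\beta+\alpha-2}$ survives intact; once the transformed log-Sobolev has this shape, the passage to ultracontractivity via the Gross--Davies machinery is essentially routine.
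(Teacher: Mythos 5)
Your proposal is correct and follows essentially the same route as the paper: both pass through an intrinsic logarithmic Sobolev inequality for the ground-state-transformed form $h_{T\Phi}$ built from the two estimates of Proposition \ref{nash}, then invoke Davies' ultracontractivity machinery (\cite[Corollary 2.2.8, Lemma 2.1.2]{Davies1}) and undo the transform $k_\mu(t,x,y)=(T\Phi)(x)\,k_{T\Phi}(t,x,y)\,(T\Phi)(y)$. The paper packages the first step by citing Rosen's Lemma \cite[Lemma 4.4.1]{Davies1} together with \cite[Corollary 4.4.2]{Davies1} rather than your ``Sobolev to log-Sobolev, then add \eqref{ip-2-rosen}'' decomposition, but these are equivalent reformulations of the same computation. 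One point you leave implicit and should record: the Davies ultracontractivity theorem with a log-Sobolev profile $\beta(\eps)\sim\eps^{-\gamma}$ requires $\gamma<1$, which here holds precisely because $\alpha>2$ gives $\beta-\alpha+2<\beta+\alpha-2$; the paper states this explicitly before invoking \cite[Corollary 2.2.8]{Davies1}.
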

\begin{proof}
It follows from Proposition \ref{nash} and Rosen's lemma, cf. \cite[Lemma 4.4.1]{Davies1}, that
for all $0\le f\in L^2(\R^N,(T\Phi)^2 \mu dx)$  and $\varepsilon >0$, we have
\begin{eqnarray*}
\int_{\R^N}(\log f)|v|^2d\mu &\leq & \varepsilon h(v,v)+({C_2}-\frac{N}{4}\log \varepsilon+\varepsilon \frac{C_4}{2}+C_1\varepsilon ^{-\gamma})\int_{\R^N}|v|^2d\mu \\
&\leq & \varepsilon h(v,v)+(C_5+C_6\varepsilon^{-\gamma})\int_{\R^N}|v|^2d\mu , \quad v\in D(h).
\end{eqnarray*}
So, applying \cite[Corollary 4.4.2]{Davies1}, one obtains
\begin{eqnarray*}
 \int_{\R^N} (f^2\log f) (T\Phi)^2d\mu &\leq & \varepsilon h_{T\Phi}(f,f)+\left( C_5+C_6\varepsilon^{-\gamma} \right)\|f\|^2_{L^2(\R^N,(T\Phi)^2 d\mu)}\\
    & & +\|f\|^2_{L^2(\R^N,(T\Phi)^2 d\mu)}\log \|f\|^2_{L^2(\R^N,(T\Phi)^2 d\mu)}
\end{eqnarray*}
for all $0\le f\in L^1\cap L^\infty \cap D(h_{T\Phi})$ and all $\varepsilon >0$, where $h_{T\Phi}$ is the quadratic form defined by
$h_{T\Phi}(v)=h((T\Phi) v,(T\Phi) v)$ for $v\in D(h_{T\Phi}):=\{v\in L^2(\R^N,(T\Phi)^2 d\mu): (T\Phi)v\in D(h)\}$.
We observe that $0<\gamma<1$. Then we can apply \cite[Corollary 2.2.8]{Davies1} and \cite[Lemma 2.1.2]{Davies1} to obtain that
\[
 0\leq k_{T\Phi}(t,x,y)\leq C_7e^{C_8t^{-\gamma}},\quad 0<t\le 1,\,x,\,y\in \R^N,
\]
where $k_{T\Phi}(\cdot ,\cdot ,\cdot)$ is the heat kernel of the semigroup generated by the selfadjoint operator associated to the form $h_{T\Phi}$.
\\
The result follows now by taking into account
\[
 k_\mu(t,x,y)=(T\Phi)(x)k_{T\Phi}(t,x,y)(T\Phi)(y),\quad t>0,\,x,\,y\in \R^N.
\]
\end{proof}

Now, we are ready to state and give the proof of the main result of this paper.
\begin{theorem}\label{heat-estimate}
If $N>2$, $\beta>\alpha-2$, $\alpha> 2$ then
\begin{align}\label{kernel0}
&k(t,x,y)\nonumber \\
& \leq C_1e^{\lambda_0 t+ C_2t^{-\gamma}}\left(\frac{1+|y|^\alpha}{1+|x|^\alpha}\right)^{\frac{b}{2\alpha}}
\frac{(|x||y|)^{-\frac{N-1}{2}-\frac{1}{4}(\beta-\alpha)}}{1+|y|^\alpha}
e^{-\int_1^{|y|}\sqrt{\frac{r^\beta}{1+r^\alpha}}dr}
    e^{-\int_1^{|y|}\sqrt{\frac{r^\beta}{1+r^\alpha}}dr}
\end{align}
for all $t>0,\,x,y\in \R^N\setminus B_1$, where $C_1, C_2$ are positive constants and $\gamma = \frac{\beta-\alpha+2}{\beta+\alpha-2}$.
\end{theorem}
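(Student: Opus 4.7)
The plan is to promote the small--time bound
$k_\mu(t,x,y)\le C_1 e^{C_2 t^{-\gamma}}(T\Phi)(x)(T\Phi)(y)$ of the preceding theorem (valid for $0<t\le 1$) to a bound valid for all $t>0$ carrying the expected factor $e^{\lambda_0 t}$, and then to convert the resulting estimate for $k_\mu$ into an estimate for $k$ via \eqref{k-mu} and the upper bound for $\Phi$ supplied by Proposition~\ref{baondedofphi}.

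The first ingredient is the spectral identification $H_\mu(T\Phi)=\lambda_0\,T\Phi$. Since $A\Phi=\lambda_0\Phi$ with $\Phi\in C_b^{1+\nu}(\R^N)\cap C^2(\R^N)$ decaying at infinity, the upper bound for $\Phi$ in Proposition~\ref{baondedofphi} ensures $\phi\Phi\in L^2_\mu$, and the intertwining $A=T^{-1}HT$ combined with Lemma~\ref{coerenza1} yields $T\Phi\in D(H_\mu)$ and $H_\mu(T\Phi)=\lambda_0(T\Phi)$. Consequently $e^{sH_\mu}(T\Phi)=e^{\lambda_0 s}\,T\Phi$ for every $s\ge 0$.

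For $t\ge 1$ I would iterate the semigroup law twice:
\[
 k_\mu(t,x,y)=\iint k_\mu(\tfrac12,x,z)\,k_\mu(t-1,z,w)\,k_\mu(\tfrac12,w,y)\,d\mu(z)\,d\mu(w).
\]
Applying the small--time bound to the two outer factors pulls $(T\Phi)(x)(T\Phi)(y)$ outside, and the remaining inner double integral, after integrating first in $w$ using $\int k_\mu(t-1,z,w)(T\Phi)(w)\,d\mu(w)=e^{\lambda_0(t-1)}(T\Phi)(z)$, reduces to $e^{\lambda_0(t-1)}\|T\Phi\|_{L^2_\mu}^2$. This gives $k_\mu(t,x,y)\le C' e^{\lambda_0 t}(T\Phi)(x)(T\Phi)(y)$ for $t\ge 1$. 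Since $\lambda_0<0$ and $C_2 t^{-\gamma}\ge 0$, this case merges with the case $0<t\le 1$ into the single bound
\[
 k_\mu(t,x,y)\le C\,e^{\lambda_0 t+C_2 t^{-\gamma}}\,(T\Phi)(x)(T\Phi)(y),\qquad t>0.
\]

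To finish, I would invert \eqref{k-mu}, writing $k(t,x,y)=\phi(y)\phi(x)^{-1}(1+|y|^\alpha)^{-1}k_\mu(t,x,y)$, and substitute $(T\Phi)(x)(T\Phi)(y)=\phi(x)\phi(y)\Phi(x)\Phi(y)$, so that a factor $\phi(y)^2(1+|y|^\alpha)^{-1}=(1+|y|^\alpha)^{b/\alpha-1}$ remains in front of $\Phi(x)\Phi(y)$. Plugging in the pointwise upper bound for $\Phi$ from Proposition~\ref{baondedofphi} and regrouping the $(1+|x|^\alpha)^{-b/(2\alpha)}$ and $(1+|y|^\alpha)^{b/(2\alpha)-1}$ factors into $\bigl(\tfrac{1+|y|^\alpha}{1+|x|^\alpha}\bigr)^{b/(2\alpha)}/(1+|y|^\alpha)$ produces exactly \eqref{kernel0}. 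I expect the spectral identification of $T\Phi$ as the $\lambda_0$--eigenfunction of $H_\mu$ to be the only non--routine step; everything else is a semigroup convolution and algebraic bookkeeping.
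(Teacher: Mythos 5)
Your proof is correct, and its overall skeleton matches the paper's: treat $0<t\le 1$ via the intrinsic ultracontractivity bound \eqref{eq:stima-kmu} plus \eqref{k-mu}, handle $t>1$ by promoting that bound with an $e^{\lambda_0 t}$ factor, and finish by inserting the upper estimate for $\Phi$ from Proposition~\ref{baondedofphi}. The one place where you genuinely diverge is the large--time step. The paper works on the diagonal: it writes $k_\mu(t,x,x)=\|k_\mu(t/2,x,\cdot)\|_{L^2_\mu}^2$, factors through $e^{(t/2-1/2)H_\mu}$, bounds the operator norm by $e^{\lambda_0(t/2-1/2)}$ (since $\lambda_0$ is the top of the spectrum of the self-adjoint $H_\mu$), obtains $k_\mu(t,x,x)\le e^{\lambda_0(t-1)}k_\mu(1,x,x)$, and then uses the pointwise inequality $k_\mu(t,x,y)\le k_\mu(t,x,x)^{1/2}k_\mu(t,y,y)^{1/2}$ to go off-diagonal. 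You instead use the triple decomposition $k_\mu(t,x,y)=\iint k_\mu(\tfrac12,x,z)k_\mu(t-1,z,w)k_\mu(\tfrac12,w,y)\,d\mu(z)\,d\mu(w)$ and contract the middle factor against the ground state $T\Phi$ using $e^{(t-1)H_\mu}(T\Phi)=e^{\lambda_0(t-1)}T\Phi$. Both routes rest on the same spectral fact, and both are sound; yours has the small advantage of producing the $(T\Phi)(x)(T\Phi)(y)$ factorization directly without a separate Cauchy--Schwarz step, at the cost of having to identify $T\Phi$ as the $\lambda_0$-eigenfunction of $H_\mu$.

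On that identification: appealing to ``Lemma~\ref{coerenza1} plus the intertwining $A=T^{-1}HT$'' is a bit thin, since the resolvent coherence in Lemma~\ref{coerenza1} is stated only on $C_c^\infty(\R^N)$ and extending it to $\Phi$ through $D(H_\mu)$ requires checking that $T\Phi$ lies in $D(h)$, i.e.\ is a genuine limit of compactly supported functions in the form norm. The cleanest way, already implicit in Proposition~\ref{on-diagonal}, is to bypass the generator entirely and argue at the semigroup level: from \eqref{k-mu} and $T_2(t)\Phi=e^{\lambda_0 t}\Phi$ one computes
\begin{align*}
e^{tH_\mu}(T\Phi)(x)&=\int_{\R^N}k_\mu(t,x,y)\,\phi(y)\Phi(y)\,d\mu(y)\\
&=\phi(x)\int_{\R^N}k(t,x,y)\Phi(y)\,dy=\phi(x)\,e^{\lambda_0 t}\Phi(x)=e^{\lambda_0 t}(T\Phi)(x),
\end{align*}
which is exactly what the triple-convolution step needs. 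With that substitution your argument is complete and is a legitimate, mildly different implementation of the paper's proof.
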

\begin{proof}
By \eqref{k-mu}, \eqref{eq:stima-kmu} and Proposition \ref{baondedofphi} we have
\begin{eqnarray*}
k(t,x,y) &=& \frac{\phi(y)}{\phi(x)}\frac{1}{1+|y|^\alpha}k_\mu(t,x,y)\\
&\leq &
C_1\frac{\phi^2(y)}{1+|y|^\alpha}e^{-C_2t^{-\gamma}}\Phi(x)\Phi(y)\\
&=& C_1\frac{ (1+|y|^\alpha)^{\frac{b}{\alpha}} }{1+|y|^\alpha}e^{-C_2t^{-\gamma}}\Phi(x)\Phi(y)\\
&\le & C_1e^{-C_2t^{-\gamma}}\left(\frac{1+|y|^\alpha}{1+|x|^\alpha}\right)^{\frac{b}{2\alpha}}
\frac{(|x||y|)^{-\frac{N-1}{2}-\frac{1}{4}(\beta-\alpha)}}{1+|y|^\alpha}\\
& & \quad 
e^{-\int_1^{|y|}\sqrt{\frac{r^\beta}{1+r^\alpha}}dr}
    e^{-\int_1^{|y|}\sqrt{\frac{r^\beta}{1+r^\alpha}}dr}
\end{eqnarray*}
for $0<t \le 1,\,x,\,y\in \R^N\setminus B_1$ and $\gamma = \frac{\beta-\alpha+2}{\beta+\alpha-2}$. 

To end the proof let us consider the case $t>1$.

The semigroup law and the symmetry of $k_\mu(t,\cdot ,\cdot)$ imply that
\begin{eqnarray*}
k_{\mu}(t,x,y)=\int_{\R^N}k_{\mu}(t-1/2,x,z)k_{\mu}(1/2,y,z)d\mu(z),\qquad\,\,t>1/2,\;\,x,y\in\R^N.
\end{eqnarray*}
On the other hand, thanks to
 \eqref{eq:stima-kmu}, and since $\beta>\alpha-2$, one deduces that the function $k_{\mu}(1/2,y,\cdot)$ belongs to $L^2_{\mu}$. Hence,
\begin{eqnarray*}
k_{\mu}(t,x,y)=(e^{(t-\frac{1}{2})H_{\mu}}k_{\mu}(1/2,y,\cdot))(x),\qquad\;\,t>1/2,\;\,x,y\in\R^N.
\end{eqnarray*}
Using again the semigroup law and the symmetry we have
\begin{align*}
k_{\mu}(t,x,x)=&\int_{\R^N}|k_{\mu}(t/2,x,y)|^2d\mu(y)\\
\le & Me^{\lambda_0(t-1)}\|k_{\mu}(1/2,x,\cdot)\|_{L^2_{\mu}}^2\\
= & Me^{\lambda_0(t-1)}k_{\mu}(1,x,x),\quad t>1,\,x\in \R^N.
\end{align*}
So, by applying (\ref{eq:stima-kmu}) to $k_{\mu}(1,x,x)$ and using the inequality 
$$k_{\mu}(t,x,y)\le (k_{\mu}(t,x,x))^{1/2}(k_{\mu}(t,y,y))^{1/2},\quad t>0,\,x,\,y\in \R^N,$$ 
one obtains (\ref{kernel0}).
\end{proof}
\begin{remark}
\begin{itemize}
\item[1.] The heat kernel estimates $k(\cdot,\cdot,\cdot)$ in Theorem \ref{heat-estimate} could be sharp in the space variables. This can be seen from Proposition \ref{on-diagonal}.
\item[2.] Since, for $r\ge 1$, $\sqrt{\frac{2r^\beta}{1+r^\alpha}} \ge r^{\frac{\beta -\alpha}{2}}$, it follows from Theorem \ref{heat-estimate} that
$$
k(t,x,y) \leq c_1e^{\lambda_0 t+ c_2t^{-\gamma}}\left(\frac{1+|y|^\alpha}{1+|x|^\alpha}\right)^{\frac{b}{2\alpha}}
\frac{(|x||y|)^{-\frac{N-1}{2}-\frac{1}{4}(\beta-\alpha)}}{1+|y|^\alpha}
e^{-\frac{\sqrt{2}}{\beta-\alpha+2}\left(|x|^{\frac{\beta-\alpha+2}{2}}+ |y|^{\frac{\beta-\alpha+2}{2}}\right)}
$$
for $t>0,\,|x|,\,|y|\ge 1$.
\end{itemize}
\end{remark}

If we denote by $\Phi_j$ the eigenfunction of $A_2$ associated to the eigenvalue $\lambda_j$, then $T\Phi_j$ is the eigenfunction of $H_\mu$ associated to $\lambda_j$. Hence, for any $t>0$ and any $x\in \R^N$, we have
\begin{eqnarray*}
e^{\lambda_jt}|T\Phi_j(x)| &=&\left|\int_{\R^N}k_\mu(t,x,y)(T\Phi_j)(y)d\mu(y)\right|\\
&\le & \left(\int_{\R^N}k_\mu(t,x,y)^2d\mu(y)\right)^{\frac{1}{2}}\|T\Phi_j\|_{L^2_\mu}\\
&=& (k_\mu(2t,x,x))^{\frac{1}{2}}\|T\Phi_j\|_{L^2_\mu}.
\end{eqnarray*}
So, by \eqref{eq:stima-kmu}, we obtain the following estimates.
\begin{coro}
Suppose that the assumptions of Theorem \ref{heat-estimate} hold. Then all eigenfunctions $\Phi_j$ of $A$ with $\|T\Phi_j\|_{L^2_\mu}=1$ satisfy
$$
|\Phi_j(x)|\le C_j|x|^{\frac{\alpha-\beta}{4}-\frac{N-1}{2}} e^{-\int_1^{|x|}\sqrt{\frac{r^\beta}{1+r^\alpha}}dr},
$$
for all $j\in\N ,\,x\in \R^N\setminus B_1$ and some constant $C_j>0$.
\end{coro}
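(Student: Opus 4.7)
The plan is to reuse verbatim the chain of inequalities written just before the statement of the corollary and then feed in the two main estimates of the paper, namely the kernel bound \eqref{eq:stima-kmu} and the upper estimate of $\Phi$ from Proposition~\ref{baondedofphi}. Concretely, since $T\Phi_j$ is an eigenfunction of $H_\mu$ with eigenvalue $\lambda_j$ (this follows from Lemma~\ref{coerenza1}, which says that $A_2$ and $H_\mu$ are intertwined by the multiplication operator $T$), the semigroup representation of $e^{tH_\mu}$ via $k_\mu$ gives
\[
e^{\lambda_j t}|T\Phi_j(x)|\le\left(\int_{\R^N}k_\mu(t,x,y)^2\,d\mu(y)\right)^{1/2}\|T\Phi_j\|_{L^2_\mu},
\]
and the symmetry/semigroup law yields $\int_{\R^N}k_\mu(t,x,y)^2d\mu(y)=k_\mu(2t,x,x)$. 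With the normalization $\|T\Phi_j\|_{L^2_\mu}=1$, this reduces the problem to bounding $k_\mu(2t,x,x)$ for a convenient value of $t$.

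Next I would set $t=1/2$, so that $2t=1\in(0,1]$ and the estimate \eqref{eq:stima-kmu} applies:
\[
k_\mu(1,x,x)\le C_1 e^{C_2}(T\Phi)(x)^2.
\]
Combining the two displays gives
\[
|T\Phi_j(x)|\le e^{-\lambda_j/2}\,C_1^{1/2}e^{C_2/2}\,(T\Phi)(x)=:\tilde C_j\,(T\Phi)(x).
\]
Dividing by $\phi(x)=(1+|x|^\alpha)^{b/(2\alpha)}$ (which is strictly positive and cancels on both sides) produces the pointwise control of the eigenfunction by the ground state:
\[
|\Phi_j(x)|\le \tilde C_j\,\Phi(x),\qquad x\in\R^N.
\]

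The last step is simply to invoke the upper bound of Proposition~\ref{baondedofphi} on $\R^N\setminus B_1$,
\[
\Phi(x)\le C\,|x|^{-\frac{N-1}{2}-\frac{1}{4}(\beta-\alpha)}(1+|x|^\alpha)^{-\frac{b}{2\alpha}}e^{-\int_1^{|x|}\sqrt{r^\beta/(1+r^\alpha)}\,dr},
\]
and absorb the bounded factor $(1+|x|^\alpha)^{-b/(2\alpha)}$ (which is harmless once we restrict to $|x|\ge 1$ and allow $C_j$ to depend on $j$) into the constant, yielding the stated inequality with $C_j:=\tilde C_j\cdot C$. There is essentially no obstacle: all the ingredients—the intertwining $A=T^{-1}HT$, the symmetric kernel $k_\mu$, the short-time intrinsic ultracontractivity \eqref{eq:stima-kmu}, and the pointwise upper bound for $\Phi$—have already been established, so the only care needed is to verify that the normalization and the choice $t=1/2$ produce a constant depending only on $j$ through $e^{-\lambda_j/2}$.
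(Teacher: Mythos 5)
Your derivation reproduces the paper's own argument step by step: the intertwining $T\Phi_j=\phi\Phi_j$, the representation $e^{\lambda_j t}T\Phi_j=e^{tH_\mu}(T\Phi_j)$, Cauchy--Schwarz, the semigroup/symmetry identity $\int_{\R^N}k_\mu(t,x,y)^2\,d\mu(y)=k_\mu(2t,x,x)$, the intrinsic ultracontractivity bound \eqref{eq:stima-kmu} at $2t=1$, cancellation of $\phi$, and finally the upper estimate of Proposition~\ref{baondedofphi}. This is exactly the route taken in the text preceding the corollary.

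There is, however, one step that is not correct as written: the assertion that the factor $(1+|x|^\alpha)^{-b/(2\alpha)}$ is \emph{bounded} on $\{|x|\ge 1\}$. This is true only for $b\ge 0$. Since the corollary is stated for any $b\in\R$, the case $b<0$ gives a factor that grows like $|x|^{|b|/2}$ and therefore cannot be absorbed into the $x$-independent constant $C_j$. What your chain of inequalities (and the paper's) actually yields is
\[
|\Phi_j(x)|\le C_j\,(1+|x|^\alpha)^{-\frac{b}{2\alpha}}\,|x|^{\frac{\alpha-\beta}{4}-\frac{N-1}{2}}\,e^{-\int_1^{|x|}\sqrt{\frac{r^\beta}{1+r^\alpha}}\,dr},
\]
which is strictly weaker than the stated bound when $b<0$. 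To reach the corollary as written for $b<0$ one would have to give up part of the exponential rate (using $|x|^{|b|/2}e^{-\eps|x|^{\xi}}\le C_\eps$ with $\xi=\tfrac{\beta-\alpha+2}{2}>0$), which changes the exponent in the final estimate. The paper itself makes the same silent jump, so this is as much an imprecision in the source as in your write-up, but you should not call the factor ``bounded''; either keep it in the final statement or explicitly trade against the exponential decay.
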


\begin{remark}
In the case $b>2-N$, we can obtain better estimates of the kernels $k$ with respect to the time variable $t$ for small $t$. In fact if we denote by $S(\cdot)$ the semigroup generated by $(1+|x|^\alpha)\Delta + b|x|^{\alpha-2}x\cdot \nabla$ in $C_b(\R^N)$, which is given by a kernel $p$, then by domination we have $0<k(t,x,y)\le p(t,x,y)$ for $t>0$ and $x,\,y\in \R^N$. So, by \cite[Remark 9.12]{Me-Sp-Ta}, it follows that
\begin{eqnarray}\label{eq-improved}
k(t,x,y) &\le & Ct^{-\frac{N+b+\alpha-4}{\alpha-2}}(1+|x|)^{2-N-b}(1+|y|)^{2-N-\alpha},\quad 2<\alpha\le 4+\frac{2b}{N-2}, \nonumber\\
k(t,x,y) &\le & Ct^{-N/2}(1+|x|^\alpha)^{2-N-b}(1+|y|^\alpha)^{2-N-\alpha},\quad \alpha\ge 4+\frac{2b}{N-2} \nonumber
\end{eqnarray}
for $0<t\le 1,\,x,\,y\in \R^N$.
\end{remark}


\end{document}